%%
%% Sample File
%%
%\documentclass[review,onefignum,onetabnum]{siamart171218}
\documentclass[a4,11pt]{article}
\usepackage[subrefformat=parens]{subcaption}
\usepackage{amssymb}
\usepackage{amstext}
\usepackage{amsmath}
\usepackage{amscd}
\usepackage{amsthm}
\usepackage{amsfonts}
\usepackage{enumerate}
\usepackage{graphicx}
\usepackage{latexsym}
\usepackage{mathrsfs}
\usepackage{mathtools}
\usepackage{cases}
\usepackage{tikz}
\usepackage{caption}
\usepackage[top=3.5cm, bottom=3.5cm, left=3.5cm, right=3.5cm]{geometry}
\usepackage{authblk}
\usepackage{comment}
\usepackage{hyperref}
%%%%%%%%%%%%%%%%%%%%%%%%%%%%%%%%%%%%%%%%%%%%%%%%%%%%%%%%%%%%%
\newtheorem{theorem}{Theorem}[section]
\newtheorem{corollary}[theorem]{Corollary}
\newtheorem{lemma}[theorem]{Lemma}
\newtheorem{proposition}[theorem]{Proposition}

%%%%%%%%%%%%%%%%%%%%%%%%%%%%%%%%%%%%%%%%%%%%%%%%%%%%%%%%%%%%%%%%%
% \theoremstyle{definition}
\newtheorem{remark}[theorem]{Remark}

\newtheorem*{claim*}{Claim}
\theoremstyle{remark}

\numberwithin{equation}{section}

\title{\vspace{-2cm}\textbf{ Local recovery of a piecewise constant anisotropic conductivity in EIT on domains \\ with exposed corners}}
%\title{Local recovery of a piecewise constant anisotropic conductivity in EIT on domains \\ with exposed corners}

\author[1,a]{\rm Maarten V. de Hoop}
\author[2,b]{\rm Takashi Furuya}
\author[3,c]{\rm Ching-Lung Lin }
\author[4, 5, d]{\rm \\ Gen Nakamura}
\author[6, e]{\rm Manmohan Vashisth}

\affil[1]{{\small Simons Chair in Computational and Applied Mathematics and Earth Science, Rice University, Houston, TX 77005-1892, USA}}
\affil[a]{{\small Email: mdehoop@rice.edu}\vspace{3mm}}

\affil[2]{{\small Education and Research Center for Mathematical and Data Science, Shimane University, Japan}}
\affil[b]{{\small Email: takashi.furuya0101@gmail.com}\vspace{3mm}}

\affil[3]{{\small Department of Mathematics, National Cheng-Kung University, Tainan 701, Taiwan}}
\affil[c]{{\small Email: cllin2@mail.ncku.edu.tw}\vspace{3mm}}

\affil[4]{{\small Department of Mathematics, Hokkaido University, Japan}}

\affil[5]{{\small Research Center of Mathematics for Social Creativity, Research Institute for Electronic Science, Hokkaido University, Japan}}
\affil[d]{{\small Email: nakamuragenn@gmail.com}\vspace{3mm}}

\affil[6]{{\small Department of Mathematics, Indian Institute of Technology, Jammu 181221, India}}
\affil[e]{{\small Email: manmohan.vashisth@iitjammu.ac.in}\vspace{3mm}}

\date{}
\begin{document}

\maketitle

\begin{abstract}
We study the local recovery of an unknown piecewise constant anisotropic conductivity in EIT (electric impedance tomography) on certain bounded Lipschitz domains $\Omega$ in $\mathbb{R}^2$ with corners. The measurement is conducted on a connected open subset of the boundary $\partial\Omega$ of $\Omega$ containing corners and is given as a localized Neumann-to-Dirichlet map. The above unknown conductivity is defined via a decomposition of $\Omega$ into polygonal cells. Specifically, we consider a parallelogram-based decomposition and a trapezoid-based decomposition. We assume that the decomposition is known, but the conductivity on each cell is unknown. We prove that the local recovery is almost surely true near a known piecewise constant anisotropic conductivity $\gamma_0$. We do so by proving that the injectivity of the Fr\'echet derivative $F'(\gamma_0)$ of the forward map $F$, say, at $\gamma_0$ is almost surely true. The proof presented, here, involves defining different classes of decompositions for $\gamma_0$ and a perturbation or contrast $H$ in a proper way so that we can find in the interior of a cell for $\gamma_0$ exposed single or double corners of a cell of $\mbox{\rm supp}H$ for the former decomposition and latter decomposition, respectively. Then, by adapting the usual proof near such corners, we establish the aforementioned injectivity.

\end{abstract}

\date{{\bf Key words}: EIT; domain decomposition; corners; anisotropy; local recovery; finite measurements}

\date{{\bf 2010 Mathematics Subject Classification}: 35R30, 35J25}

\section{Introduction}

Let $\Omega \Subset \mathbb{R}^{2}$ be a simply connected Lipschitz domain with corners occupied by an electric conductive medium. Also, let $\emptyset \neq \Sigma \subset \partial \Omega$ be an open connected set which includes some corners of $\partial\Omega$ and on which we have available boundary measurements. We assume that the electric conductivity, $\gamma$, satisfies $\gamma\in L_+^\infty(\Omega)$, with 
\[
L^{\infty}_{+}(\Omega):=\{ \gamma \in L^{\infty}(\Omega; \mathbb{R}^{2 \times 2})\,: \gamma \geq \delta_{0}I \ \ \mathrm{a.e.} \ \mathrm{in} \ \Omega\,\,\mbox{\rm for a fixed constant $\delta_0>0$}\},
\]
where $I$ is the identity matrix.
Electrical impedance tomography (EIT) is an inverse problem which is stated as follows. Let $\Lambda_\gamma: \dot{H}^{-1/2}_{\diamond}(\overline\Sigma)\rightarrow \overline{H}^{1/2}_{\diamond}(\Sigma)$ be the localized Neumann-Dirichlet map (loc$\_$ND-map) defined as
\[
\Lambda_{\gamma}:\dot{H}^{-1/2}_{\diamond}(\overline\Sigma)\ni f \mapsto u\bigl|_{\Sigma}\in\overline{H}^{1/2}_{\diamond}(\Sigma),
\]
where $u \in H^{1}_{\diamond}(\Omega)$ is the unique solution to the boundary value problem given as
\begin{equation}\label{EIT BVP} 
\nabla \cdot \gamma \nabla u = 0\,\,\text{in}\,\,\Omega, \ \ \partial_\gamma u:=\nu\cdot(\gamma\nabla u) =f\,\,\text{on}\,\,\partial\Omega,
\end{equation}
in which $\nu$ is the unit normal vector of $\partial\Omega$ directed outward.
The function spaces for $\Lambda_\gamma$ are defined by
\[
\dot{H}^{-1/2}_{\diamond}(\overline{\Sigma}):=\left\{ f \in H^{-1/2}_{\diamond}(\partial \Omega)\,: \mathrm{supp}f \subset \overline{\Sigma}\right\},\,\,
\overline{H}^{1/2}_{\diamond}(\Sigma):=\left\{ f \bigl|_{\Sigma}\,: f \in H^{1/2}_{\diamond}(\partial \Omega) \right\}.
\]
%The spaces $\dot{H}^{- 1/2}(\overline{\Sigma})$ and $\overline{H}^{- 1/2}(\Sigma)$ are dual to one another (see \cite{hormander2007analysis,mclean2000strongly}). 
where
\[
H^{-1/2}_{\diamond}(\partial \Omega):=\left\{ f \in H^{-1/2}(\partial \Omega)\,: \langle f,1\rangle=0\right\},
\]
\[
H^{1/2}_{\diamond}(\partial \Omega):=\left\{ f \in H^{-1/2}(\partial \Omega)\,: \int_{\partial \Omega}f ds=0 \right\},
\]
\[
H^{1}_{\diamond}(\Omega):=\left\{ u \in H^{1}(\Omega)\,: \int_{\partial \Omega} u ds=0 \right\}.
\]
Here, $\langle \cdot, \cdot \rangle$ denotes the dual bracket in the dual system $\langle H^{-1/2}(\partial \Omega), H^{1/2}(\partial \Omega) \rangle$.
We consider $\Lambda_\gamma$ as boundary measurements. EIT or, in geophysics, the Direct Current (DC) method which concerns determining the conductivity $\gamma$ from $\Lambda_\gamma$, is also known as the Calder\'on problem \cite{calderon2006inverse}.

If the conductivity, $\gamma$, is isotropic, that is, $\gamma = \sigma I$ with a scalar function $\sigma$ on $\Omega$, a vast literature is available and the theory of EIT has achieved a substantial level of completeness, see Uhlmann \cite{Uhlmann2009ElectricalIT}. However, with the counterexample by Tartar \cite{7c36e6a0565e4336a9b6b5bab0f9939b}, the general anisotropic problem still poses several unresolved issues. A principal line of investigation concerning anisotropy in EIT has been of proving uniqueness modulo a change of variables which fixes the boundary \cite{doi:10.1081/PDE-200044485, belishev2003calderon, lassas2003dirichlet, lassas2001determining, lee1989determining, nachman1996global, sylvester1990anisotropic}. In most applications, however, knowledge of position and, hence, coordinates (variables) are important. In this direction, certain, diverse results are available \cite{alessandrini1990singular, Alessandrini2001DeterminingCW,alessandrini2009local,gaburro2009recovering,gaburro2015lipschitz, ikehata2000identification, 7c36e6a0565e4336a9b6b5bab0f9939b, lionheart1997conformal}. In \cite{alessandrini2017uniqueness} a uniqueness result was obtained when the unknown anisotropic conductivity is assumed to be piecewise constant on a given domain decomposition with non-flat separating interfaces. One of the authors of this paper with collaborators also specialized Tartar's counterexample to the case of a half space and constant conductivity thus demonstrating that the non-flatness condition on boundary and interfaces is necessary.

Here, we prove that {\sl{local}} uniqueness and Lipschitz stability are almost surely true for anisotropic piecewise constant conductivities with {\sl{flat}} interfaces defining a domain decomposition can nonetheless be obtained. As a byproduct we can have the probabilistic convergence of the Levenberg-Marquardt iteration scheme and the Landweber iteration scheme for locally recovering the unknown conductivity from the loc$\_$ND-map  (see \cite{stagg, cell1}, and the references therein). For proving them, we introduce a (known) background and an unknown contrast with distinct, particular decompositions of $\Omega$. We refer to subdomains as {\sl cells}. The key contribution of this paper is the exploitation of exposed corners of the support of the unknown anisotropic contrast. Such exposure limits the choice of cells in the decomposition. For example, with triangular decompositions there is not necessarily a properly exposed corner point. In fact, a restriction of decompositions into quadrilateral cells seems essential. 
\begin{comment}{\color{blue}[Since we already mentioned above about the local recovery, we should delete the next sentence. "As we naturally obtain, with local uniqueness, a Lipschitz stability estimate, which provides the convergence of a local 
recovery result by common local-optimization-based approaches (see \cite{stagg, cell1}, and the references therein)."]}
\end{comment}

EIT with conductivities associated with a domain decomposition subjected to appropriate conditions, have been analyzed before in the isotropic case. In the piecewise analytic case, in dimension two, the unique determination was proven in \cite{kohn1985determining}. A conditional Lipschitz stability estimate for identifying an isotropic piecewise constant conductivity with known interfaces from the localized DN-map was given in \cite{alessandrini2005lipschitz}. Here, the interfaces are the inner boundaries following the domain decomposition. Further results were obtained in \cite{alessandrini2017lipschitz, beretta2011lipschitz}, extending the representation to piecewise linear conductivities. The local recovery, in dimension two, of an isotropic piecewise polynomial conductivity on a triangulated domain was analyzed in \cite{lechleiter2008newton}.
A geometric inverse problem that has been studied for piecewise constant perturbations concerns the geodesy ray transform on a two-dimensional compact non-trapping manifold \cite{ilmavirta_lehtonen_salo_2020}.
The authors consider simplices for their decomposition and, thus, manifolds with corners. They exploit the corners in their proof as we do.

EIT is referred to as electric resistivity tomography (ERT) in geophysics. As early as 1920, Conrad Schlumberger \cite{Schlumberger} recognized that anisotropy may affect geological formations' DC electrical properties. Anisotropic effects when measuring electromagnetic fields in geophysical applications have been studied ever since.
\begin{comment}
[this is a term from geology; it refers to a ``layer'' that appears (possibly broken up) in very large regions of a continent; for example, there is a key formation that contains oil appearing throughout Saudi Arabia and Oman]
\end{comment}
From an inverse problems perspective, it is interesting that Maillet and Doll \cite{M-Do} already identified obstructions to recovering an anisotropic resistivity from (boundary) data. Many of the studies of anisotropy in as much as the solutions of the boundary value problem and their probing capabilities are concerned, have been restricted to electrical conductivities (or resistivities) that are piecewise constant while plane layers form the subdomains in a domain decomposition of a half space. That is, flat interfaces separate the subdomains. Yin and Weidelt \cite{yin1999geoelectrical} considered arbitrary anisotropy for the DC-resistivity method in planarly layered media in a geophysics context.

Tiling a domain has a natural link to the domain decomposition of the finite element method. In this paper, we consider two types of quadrilateral domain decompositions forming tilings. One is the parallelogram-based decomposition and the other is a trapezoid-based decomposition. The former one has one exposed corner and the latter one has two exposed corners
in some arrangement.
{\it Having $N$ exposed corner} means that any connected union of quadrilaterals has $N$ corner points which are vertexes of single quadrilateral.
In Figure~\ref{Tiling_quadrilaterals} we show examples of tiling  general domains using both decompositions.
%\begin{comment} Quadilateral decompositions are acceptable for finite element methods because a plane can be tiled by arbitrary quadrilaterals, see Figure \ref{Tiling_quadrilaterals}. \end{comment}
The trapezoidal decomposition fits naturally strata in structural geology in the presence of faults (see \cite{Twiss}). Here, we shed light on in as far as anisotropic conductivities can be almost surely recovered for a domain with aforementioned decompositions.

\begin{figure}[h]
\hspace{0.0cm}
\center
\includegraphics[keepaspectratio, scale=0.4]{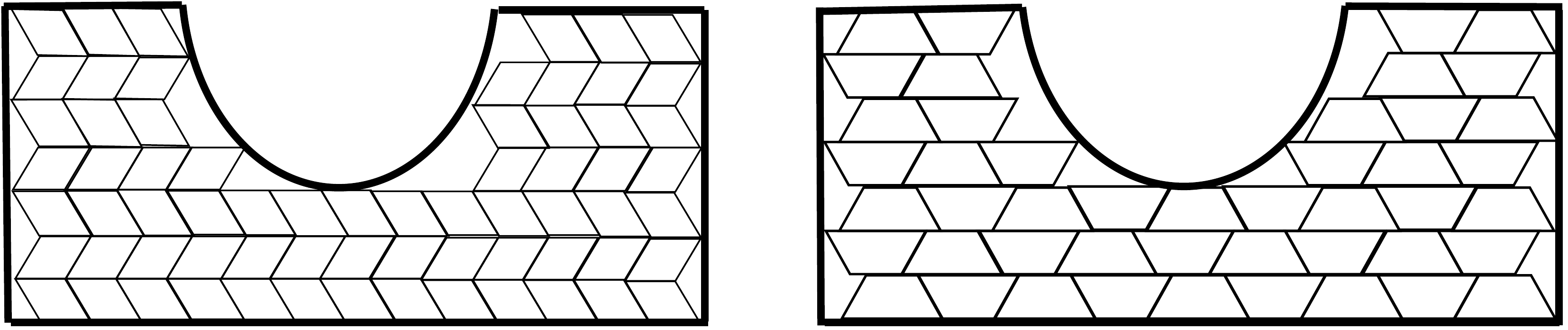}
\caption{
Tiling a general domain using parallelogram (left) and trapezoid (right) based decomposition. 
The black thick curve is the boundary of the domain which is filled by quadrilaterals as much as possible.
In the trapezoid based decomposition, the horizontal array is stacked by shifting in order to have two exposed corners.}
%\caption{{\color{red} [I suggest illustrating a more general domain (than the ones illustrated in Figures 2 and 3) built up from multiple large $\sim$ parallelograms and both decompositions (left and right)]}}
\label{Tiling_quadrilaterals}
\end{figure}

\medskip

\noindent
To state our main results, we define the {\sl forward operator}, $F$, as
\[
F : L^{\infty}_{+}(\Omega) \to \mathcal{L}(\dot{H}^{- 1/2}_{\diamond}(\overline{\Sigma}),\,\,\bar{H}^{1/2}_{\diamond}(\Sigma) )\,\,\mbox{\rm via}\,\,F(\gamma):=\Lambda_{\gamma},
\]
which maps an unknown conductivity $\gamma$ to the measured data $\Lambda_\gamma$. 

\begin{comment}
To state our main results, we define the {\sl forward operator}, $F$, as
\[
F : L^{\infty}_{+}(\Omega) \to \mathcal{L}(\dot{H}^{- 1/2}_{\diamond}(\overline{\Sigma}),\,\,\bar{H}^{1/2}_{\diamond}(\Sigma) )\,\,\mbox{\rm via}\,\,F(\gamma):=\Lambda_{\gamma},
\]
which maps an unknown conductivity $\gamma$ to the measured data $\Lambda_\gamma$. In this paper, we consider domains, $\Omega$, with corners supporting the following two kinds of decompositions; one is based on parallelograms, with one exposed corner, and the other is based on trapezoids, with two exposed corners. The trapezoidal decomposition can be motivated, in geophysics applications, by fault systems appearing in structural geology.
\end{comment}

\subsection{Parallelogram-based decomposed domain}

First, we introduce the notion of parallelogram-based or skewed-grid-based domain decomposition. For simplicity of describing this, we let
\[
\Omega=[0,1]\times[0,1], \ \ \ \Sigma = \left([0,1]\times\{0\} \right) \cup \left( \{0\}\times[0,1] \right),
\]
with respect to an oblique coordinate with an angle $\theta\in (0,\pi)$.
% \begin{dfn} \label{digitization}
A parallelogram-based domain, $\Omega$, has the form
\[
\Omega = \bigcup_{C \in \mathcal{P}_{r,\theta}} C,
\]
where $\mathcal{P}_{r,\theta}$ is a set of parallelograms, $C$, given as follows. Let $r \in (0,1]$. Starting from the rhombus $[0,r]\times[0,r]$ , we fill $\Omega$ horizontally by a horizontal array of such rhombuses with side length $r$ as much as possible, and denote the family of these rhombuses by $\mathcal{S}_1$ and their union by $S_1$.
If $T_1:=\left(\Omega\cap([0,1]\times[0,r])\right)\setminus S_1\neq\emptyset$, we add this parallelogram to $\mathcal{S}_1$. We refer to the result as the first stair of an array of parallelograms. Here and hereafter, we abuse the terminologies horizontal and vertical for directions parallel to the axis of the first coordinate and to the axis of the second coordinate of the oblique coordinates, respectively.
Several stairs of such horizontal arrays $\mathcal{S}_j,\,j=1,\cdots J$ including possible $T_j,\,j=1,\dots, J$ can be stacked to fill $\Omega$ as much as possible.

If the parallelograms of $\cup_{j=1}^J\mathcal{S}_j$ cannot fill $\Omega$ entirely, then we divide the remainder at the top vertically by parallelograms with horizontal side length $r$ and vertical one $\tilde{r}$, where $\tilde r$ is the horizontal side length of the parallelogram $T_1$, and define $U:=(1-\tilde r,1)\times(1-\tilde r,1)$ as the rhombus with side length $\tilde r$ in the top right corner of $\Omega$. 
Thus $\mathcal{P}_{r,\theta}$ becomes the set of all of these parallelograms including $\overline U$, see Figure~\ref{parallelogram_division}.

\begin{figure}[h]
\hspace{0.0cm}
\center
\includegraphics[keepaspectratio, scale=0.7]{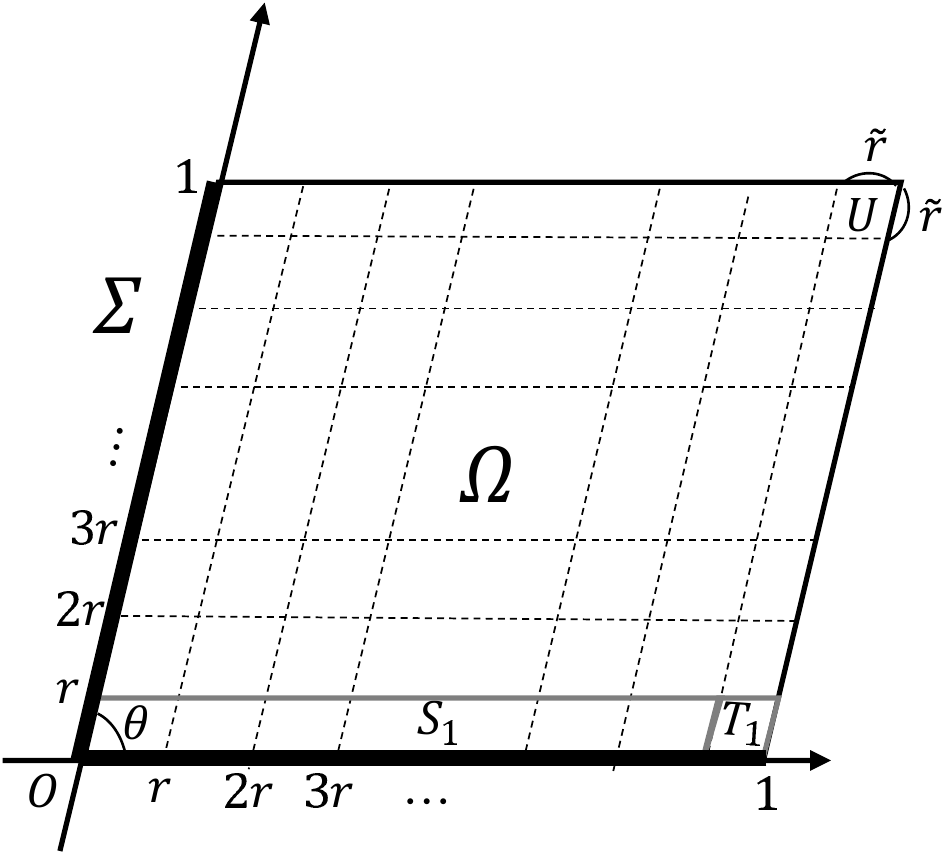}
\caption{Parallelogram decomposition.}
\label{parallelogram_division}
\end{figure}
%%%%%%%%%%%%%%%%%%%%%%%%%%%%%%%%%%%%%%%%%%%%%%%%%%%%%%%%%%%%%%%%%%%%%%%%%%%%%%%%%%
For $r_{0} \in (0,1]$ and $\theta \in (0, \pi)$, we denote 
\begin{equation}\label{function space B}
V_{\mathcal{P}_{r_0,\theta}}^{c}:=\left\{ \gamma : \Omega \to \mathbb{R}^{2 \times 2} \,:
\mbox{$\gamma$ is $\mathrm{constant}$ for each $C\in \mathcal{P}_{r_0,\theta}$}\right\}.
\end{equation}
Also, for $r \in (0,1]$, $\theta \in (0, \pi)$, and fixing $\{\phi_{c}\}_{C \in \mathcal{P}_{r,\theta}}$ with each $\phi_C\in(0,2\pi]$, we denote 
\begin{equation}\label{function space H}
V_{\mathcal{P}_{r,\theta}, \{\phi_{c}\}}^{c}:=\left\{ H : \Omega \to \mathbb{R}^{2 \times 2} \,:
\begin{array}{cc}
 {}^\forall C\in \mathcal{P}_{r,\theta}, {}^\exists h_{1,c},h_{2,c} \in \mathbb{R}: \\
H\Big|_{C}=R_{\phi_c}^{T} \left(
\begin{array}{cc}
h_{1,c}& 0 \\
0 & h_{2,c} \\
\end{array}
\right) R_{\phi_c}
\end{array}
\right\},
\end{equation}
where $R_{\phi}$ is the rotation matrix with angle $\phi \in (0, 2 \pi]$. 
Further, we denote 
\[
V_{\mathcal{P}_{r_0,\theta}}^{c, +}:= L^{\infty}_{+}(\Omega) \cap V_{\mathcal{P}_{r_0,\theta}}^{c}\,\,\mbox{and}\,\, V_{\mathcal{P}_{r,\theta}, \{\phi_{c}\}}^{c, +}:= L^{\infty}_{+}(\Omega) \cap V_{\mathcal{P}_{r,\theta}, \{\phi_{c}\}}^{c},
\]
from which we take a background conductivity and have a perturbative conductivity, respectively. Here and hereafter, we use the terms perturbation and perturbative for elements of $V_{\mathcal{P}_{r,\theta}, \{\phi_{c}\}}^{c, +}$ and even the difference of their elements.

In addition to these, we define the probability measure $Pr$ on the infinite product space $S:=\prod_{j \in \mathbb{N}}S_{j}$ with $S_1:=(0,1],\,S_{j+1}:=(0,2\pi],\,j\in{\mathbb N}$ as follows. We first let $\{(S_j, \mathcal{F}_{j}, P_j)\}_{j \in \mathbb{N}}$ be the collection of probability spaces defined by
$$
\left\{
\begin{array}{ll}
\mathcal{F}_1:=\mathcal{B}((0,1]),\,\mathcal{F}_{j+1}:=\mathcal{B}((0,2\pi])
\,\,\,&\mbox{for $j\in{\mathbb N}$},\\
P_1:=m,\,\,\,P_{j+1}:=\frac{1}{2\pi}m\,\,\,&\mbox{for $j\in{\mathbb N}$}
\end{array}
\right.
$$
with the Lebesgue measure $m$ on ${\mathbb R}$ and the Borel classes $\mathcal{B}((0,1])$,  $\mathcal{B}((0,2\pi])$ for $(0,1]$, $(0,2\pi]$.
If any subset $A$ of $S$ has the form $A=\prod_{j \in \mathbb{N}}A_{j}\in S$ with $A_j \in \mathcal{F}_{j}$ such that $A_j=S_j$ except for finitely many $j$, we call it a measurable  cylinder. Let $\mathcal{M}$ and $P$ be the collection of all measurable cylinders and the finitely additive set function $P$ on the additive class $\mathcal{M}$ defined by $P(A):=\prod_{j \in \mathbb{N}}P_{j}(A_j)$ for $A \in \mathcal{M}$, respectively.
Then, $P$ admits a unique extension to a probability measure $Pr$ on the $\sigma$-algebra $\sigma(\mathcal{M})$ of $\mathcal{M}$ (see \cite{doi:10.1080/00029890.1996.12004804}).

%%%%%%%%%%%%%%%%%%%%%%%%%%%%%%%%%%%%%%%%%
\medskip

We now state our first main result, which claims probalistic local uniqueness and Lipschitz stability of the inverse problem on parallelogram-based decomposed domains.
%%%%%%%%%%%%%%%%%%%%%%%%%%%%%%%%%%%%%%%%%%%%%%%%%%%%%%%%%%%%%%%%%%%%%%%%%
%%%%%%%%%%%%%%%%%%%%%%%%%%%%%%%%%%%%%%%%%%%%%%%%%%%%%%%%%%%%%%%%%%%%%%%%%
\begin{theorem}\label{Probabilistic LLS}
Fix $\theta\in(0,\pi)$, $r_{0} \in (0,1]$, and $\gamma_0 \in \mbox{\rm int}(V_{\mathcal{P}_{r_0,\theta}}^{c,+})$.
For $r \in (0,1]$ and $\vec{\phi}=(\phi_1, \phi_2, ... ) \in \prod_{i \in \mathbb{N}} (0, 2 \pi]$, let $Q^{p}_{r,\vec{\phi}}$ be a proposition stated as follows: 
\begin{equation}
\left\{
\begin{array}{ll}
{}^\exists\epsilon>0:\\
\quad\,\,
{}^\forall \{\phi_{c}\}_{C \in \mathcal{P}_{r,\theta}} \subset \{\phi_{i} \}_{i \in \mathbb{N}_0}, \,\,
{}^\forall\gamma\in\mbox{\rm int}(V_{\mathcal{P}_{r,\theta}, \{\phi_{c}\}}^{c, +})\,\,\mbox{\rm with}\,\,\Vert\gamma-\gamma_0\Vert_\infty\le\epsilon,\\
\quad\,\,\, {}^\exists\delta=\delta(\{\phi_{c}\}, \gamma)>0\,\,\mbox{\rm with}\,\,B_\delta(\gamma)\subset\mbox{\rm int}(V_{\mathcal{P}_{r,\theta}, \{\phi_{c}\}}^{c, +}):
\\
\qquad\,\,\,\,\mbox{\rm such that}\\
\qquad\qquad\Vert\tau-\sigma\Vert_\infty\le C
\Vert F(\tau)-F(\sigma)\Vert,\,\,\tau,\,\sigma\in B_\delta(\gamma)\\
\qquad\qquad\qquad\qquad\quad\,\,\,\mbox{\rm with a constant}\,\,C=C(\{\phi_{c}\}, \gamma)>0,
\end{array}
\right. \label{proposition_LS}
\end{equation}
where $\mbox{\rm int}(E)$ is the interior of a set $E$, $\Vert\tau-\sigma\Vert_\infty$ is the essential supremum of $\tau-\sigma$ in $\Omega$, $\Vert F(\tau)-F(\sigma)\Vert$ is the operator norm of  $F(\tau)-F(\sigma)$ and $B_\delta(\gamma)$ is an open ball with radius $\delta$ centered at $\gamma$. 

Then, for an event $\mathcal{E}^p$ defined by $$\mathcal{E}^{p}=\{ (r,\vec{\phi}) \in (0,1]\times\prod_{i\in \mathbb{N}} (0,2\pi]\,: Q^{p}_{r,\vec{\phi}} \ \text{\rm is true}\},
$$ 
we have
\begin{equation}
Pr(\mathcal{E}^{p})=1. \label{infinite product}
\end{equation}

\end{theorem}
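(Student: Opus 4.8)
The plan is to reduce the probabilistic Lipschitz stability statement to an almost-sure injectivity statement for the Fréchet derivative $F'(\gamma_0)$, exactly as announced in the introduction, and then to prove that injectivity by locating an exposed corner of $\operatorname{supp} H$ lying in the interior of a cell of the background decomposition $\mathcal{P}_{r_0,\theta}$. First I would recall (or prove as a preliminary lemma) the standard equivalence: for piecewise constant conductivities on a \emph{fixed} finite decomposition, injectivity of $F'(\gamma)$ at a point $\gamma$ in the (finite-dimensional) parameter space, together with the real-analytic or at least $C^1$ dependence of $F$ on the finitely many parameters, yields a local Lipschitz stability estimate $\Vert\tau-\sigma\Vert_\infty \le C\Vert F(\tau)-F(\sigma)\Vert$ on a small ball $B_\delta(\gamma)$ — this is the Alessandrini–Vessella–type argument used in \cite{alessandrini2005lipschitz, beretta2011lipschitz}. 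The finiteness of the decomposition is what makes $V^{c,+}_{\mathcal{P}_{r,\theta},\{\phi_c\}}$ locally a finite-dimensional manifold, so that injectivity of the derivative on the tangent space upgrades to a quantitative lower bound by compactness of the unit sphere. Thus $Q^p_{r,\vec\phi}$ holds as soon as $F'(\gamma)$ is injective for every admissible $\gamma$ near $\gamma_0$ and every choice of the rotation angles $\{\phi_c\}$ drawn from the sampled sequence.

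Next I would set up the measure-theoretic skeleton. Define, for fixed $r$, the "bad set" of parameters where injectivity fails; the key geometric observation is that injectivity of $F'(\gamma_0)$ — and hence of $F'(\gamma)$ for $\gamma$ close by, since injectivity on a finite-dimensional space is an open condition once one has the corner argument — can be established whenever the perturbation decomposition $\mathcal{P}_{r,\theta}$ has a cell whose a vertex is an \emph{exposed corner} of $\operatorname{supp} H$ sitting strictly inside some cell $C_0 \in \mathcal{P}_{r_0,\theta}$, and moreover the conductivity values on the two sides of that corner differ (which is exactly where the rotation angles $\phi_c$ and the contrast entries $h_{1,c},h_{2,c}$ enter). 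The classical local argument near an exposed corner — singular behaviour of the solution / unique continuation from the corner, following \cite{alessandrini2017uniqueness} and the corner-exploitation idea attributed to \cite{ilmavirta_lehtonen_salo_2020} — then forces $H \equiv 0$ near that corner, and propagating this cell by cell kills $H$ on all of $\Omega$, giving injectivity. So the whole theorem rests on showing that for $Pr$-almost every $(r,\vec\phi)$ one can \emph{always} find such an exposed corner no matter which finite subcollection $\{\phi_c\}$ of the sampled angles is assigned, and no matter which admissible $\gamma$, $H$ are chosen.

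For the measure computation I would argue as follows. For a fixed rational-type relation, the set of $r\in(0,1]$ for which some vertex of $\mathcal{P}_{r,\theta}$ lands on an interface of $\mathcal{P}_{r_0,\theta}$ (so that it fails to be interior to a background cell) is a countable union of lower-dimensional algebraic sets in $r$, hence Lebesgue-null; for $r$ outside this null set every "boundary-most" exposed corner of any connected union of $\mathcal{P}_{r,\theta}$-cells falls in the interior of a background cell. This handles the $S_1=(0,1]$ factor. For the angle factors: the degenerate cases are those where $R_{\phi_c}$ makes the contrast on a cell align with the background so that no jump is created across the exposed corner; for each cell this is a finite set of angles, hence $P_{j+1}$-null, and there are countably many constraints, so the union is still null in the product measure. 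One then assembles these into a measurable cylinder complement argument: the event $\mathcal{E}^p$ contains the complement of a countable union of $Pr$-null cylinder sets, and since $Pr$ is a genuine (countably additive) probability measure on $\sigma(\mathcal{M})$ by the cited extension theorem \cite{doi:10.1080/00029890.1996.12004804}, we get $Pr(\mathcal{E}^p)=1$. Measurability of $\mathcal{E}^p$ itself needs a brief argument — expressing $Q^p_{r,\vec\phi}$ via countably many conditions on the countably many parameters and using separability of the relevant spaces to reduce the $\forall\gamma$, $\forall H$ quantifiers to countable ones.

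\textbf{Main obstacle.} The hard part will be the uniform version of the exposed-corner argument: one must show not merely that for a \emph{generic} single $(r,\vec\phi)$ some exposed corner exists, but that for almost every $(r,\vec\phi)$ the corner argument applies \emph{simultaneously} for every admissible assignment $\{\phi_c\}\subset\{\phi_i\}$, every $\gamma$ in a uniform $\epsilon$-ball, and every contrast $H$ — and that the resulting lower bound on $F'$ is uniform enough to survive the passage from derivative injectivity to the Lipschitz estimate with a \emph{uniform} $\epsilon$ (independent of $\{\phi_c\}$ and $\gamma$) in the outer existential of \eqref{proposition_LS}. Controlling the null sets so that they do not depend on these choices — essentially a Fubini/"almost all of one coordinate works for almost all configurations of the others" argument carefully threaded through the product structure — is where the real work lies; the corner-singularity analysis itself is an adaptation of known techniques.
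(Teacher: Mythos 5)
Your overall architecture does match the paper's: the local Lipschitz estimate is reduced to the positivity of $\min\{\|F'(\gamma_0)[H]\|: H\in V_p,\ \|H\|_\infty=1\}$ on the finite-dimensional perturbation space (the paper's Lemma \ref{Deterministic Lipschitz Stability}, proved by the Lechleiter--Rieder-type Taylor/Lipschitz estimates for $F$ rather than the Alessandrini--Vessella route, and needing injectivity only at $\gamma_0$, transferred to nearby $\sigma$ by $\|F'(\tau)-F'(\sigma)\|\le C_4\|\tau-\sigma\|_\infty$); injectivity is then obtained at an exposed corner of $\mathrm{supp}\,H$ which the irrationality $r/r_0\notin\mathbb{Q}$ places in the interior of a background cell, after the extension argument removes the interaction with $\Sigma$; and the probabilistic statement follows from a countable union of null sets in $(r,\vec\phi)$, exactly as you indicate.

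The genuine gap is in what you claim the single-corner analysis delivers. You assert that the corner argument ``forces $H\equiv 0$ near that corner'' and then propagate cell by cell; for an anisotropic constant contrast this is false as stated and is precisely the difficulty the theorem's formulation is designed around. In the paper (Proposition \ref{Local_injectivity} and Lemma \ref{Blow up of the integral}), sending the pole $\eta$ of the fundamental solution first to the bottom edge and then to the vertex of the normalized corner yields exactly \emph{two} linear conditions \eqref{condition1} on the symmetric matrix $\check H|_{\check C_H}$, namely $\check h_{11}+\check h_{22}=0$ and $\tilde k\check h_{11}+\check h_{12}=0$; two conditions cannot annihilate a general three-parameter symmetric matrix, and no unique-continuation or propagation step is available or needed (the paper concludes by direct contradiction with $C_H\subset\mathrm{supp}\,H$). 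What closes the argument is the restricted form $H|_{C_H}=R_{\phi_c}^{T}\mathrm{diag}(h_1,h_2)R_{\phi_c}$ with \emph{known} axis $\phi_c$, which turns the two conditions into a $2\times2$ system in $(h_1,h_2)$ whose determinant is $\sqrt{p^2+q^2}\,\sin(2\phi_c+\alpha_C)$ as in \eqref{determinant}; the genericity condition \eqref{assumption_for_phi} is exactly its nonvanishing. So the role of the sampled angles is not ``creating a jump across the corner'' but making this algebraic system nonsingular, and since the bad angles form, for each index $i$ and each background cell $C$, a finite set depending only on $\gamma_0|_C$, the null-set bookkeeping is a plain countable union of null cylinders --- no Fubini threading over $\gamma$, $H$, or assignments is required, so the ``main obstacle'' you identify largely dissolves once the determinant condition is in hand. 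The real work of the proof is the explicit blow-up computation of $\int_T \check H\nabla\log|y-\eta|\cdot\nabla\log|y-\eta|\,dy$ (Section 4), which your proposal defers to ``known techniques'' while mischaracterizing its output; as written, the step from the corner to $H=0$ would fail. (Your worry about the uniformity of $\epsilon$ over the countably many assignments $\{\phi_c\}$ is a fair observation, but it concerns a point the paper itself passes over rather than a difficulty its method resolves differently.)
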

%%%%%%%%%%%
\noindent
The meanings of the logical notations  ${}^\forall$ and ${}^\exists$ used in the above theorem are ``for any'' and ``exists''.
Then, the above theorem means that by specifying the side length $r$ for the perturbation cell and preparing an infinite dimensional vector $\vec{\phi}$ from which we take the rotation angles $\{\phi_{c}\}_{C \in \mathcal{P}_{r,\theta}}$ of the perturbation, we can have the local Lipschitz stability $\Vert\tau-\sigma\Vert_\infty\le C\Vert F(\tau)-F(\sigma)\Vert,\,\,\tau,\,\sigma\in B_\delta(\gamma)$ for almost all such $(r,\vec{\phi})$ in the set $(0,1]\times \prod_{i \in \mathbb{N}} (0, 2 \pi]$.

%%%%%%%%%%%%%%%%%%%%%%%%%%%%%%
%%%%%%%%%%%%%%%%%%%%%%%%%%%%%%%%%%%%%%%%%%%%%%%%%%%%%%%%%%%%%%%%%%%%%%%%%%%%%%%%%%%%%%%%%%%%%%%%%%%%%%%%%%%%%%%%%%%%%%%%%%%%%%%%%%%%%%%%%%%%%%%%%%%%%%%%%%%%%%%%%%%%%%%%
\subsection{Trapezoid-based decomposed domain}
In this subsection, we introduce another domain decomposition which we call the trapezoid-based domain decomposition. After that we will give our second main result.
\par
To define the decomposition, we first define $\Omega$ and $\Sigma$ as follows. Prepare an even number of isosceles trapezoids with low angle $\theta \in (0, \pi)$ and side length one. Invert half of these trapezoids. Then attach these trapezoids and inverted trapezoids alternatingly to form a parallelogram. Furthermore, slide the inverted trapezoids by $q \in (0,1)$. Then, we define $\Omega$ as the union of these trapezoids and inverted trapezoids (Figure \ref{Shape_omega_trape})
and $\Sigma\subset\partial\Omega$ as the thick black lines in Figure \ref{Shape_omega_trape}.
\par
We also define the following two sets.
Let $\mathcal{L}_{r_0,\theta}$ be the set of cells with lateral side length $r_0$ obtained by dividing $\Omega$ horizontally from the bottom up. 
If $r_0 \notin \mathbb{Q}$, there will be remainders at the top appear with lateral side lengths different from $r_0$. We also include them in the set $\mathcal{L}_{r_0,\theta}$ (see Figure \ref{trapezoid_case1} left). This
$\mathcal{L}_{r_0,\theta}$ gives the set of the background cells.
Let $\mathcal{T}_{r,\theta}$ be the set of isosceles trapezoids with length $r$ obtained by cutting the isosceles trapezoids and the inverted isosceles trapezoids in $\Omega$ horizontally from the bottom up. Again, if $r\notin \mathbb{Q}$, there will be remainders which are also included in $\mathcal{T}_{r,\theta}$ (see the right in Figure \ref{trapezoid_case1}). 
This $\mathcal{T}_{r,\theta}$ gives the set of the perturbative cells. 

%%%%%%%%%%%%%%%%%%%%%%%%%%%%%%%%%%%%%%%%%%%%%%%%%%%%%%%%%%%%%%%%%%%%%%%%%%%%%%%%%%%%%%%%%%%%%%%%%%%%%%%%%%%%%%%%%%%%%%%%%
\begin{figure}[h]
\hspace{0.0cm}
\center
\includegraphics[keepaspectratio, scale=0.5]{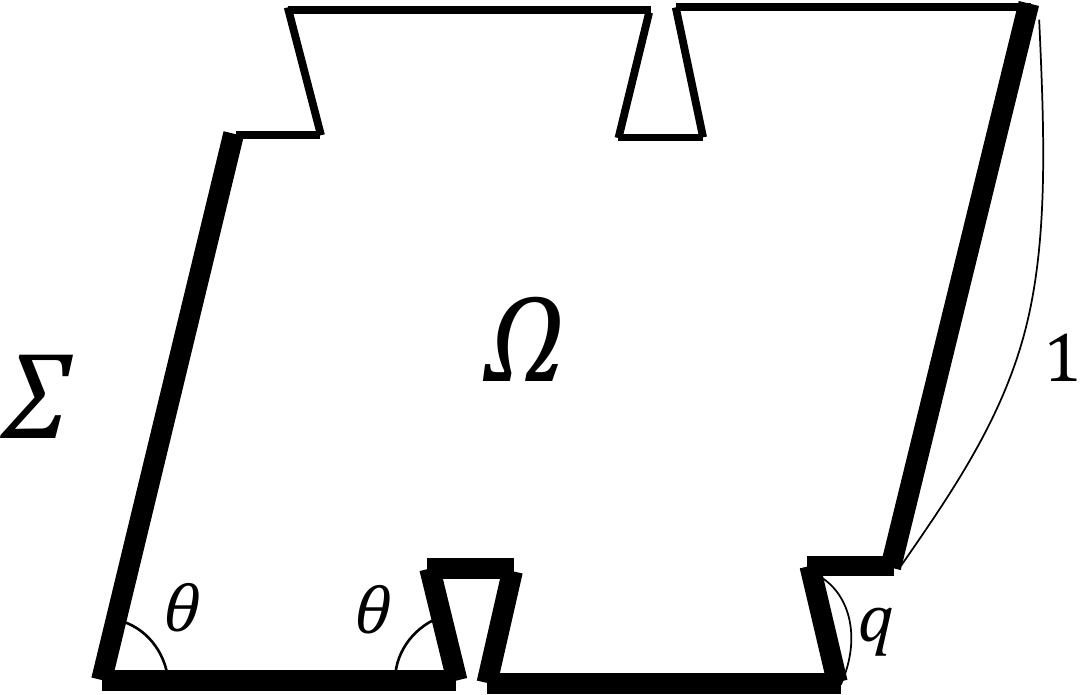}
\caption{Union of trapezoids.} 
\label{Shape_omega_trape}
\end{figure}
%%%%%%%%%%%%%%%%%%%%%%%%%%%%%%%%%%%%%%%%%%%%%%%%%%%%%%%%%%%%%%%%%%%%%%%%%%%%%%%%%%%%%%%%%%%%%%%%%%%%%%%%%%%%%%%%%%%%%%%%%
%%%%%%%%%%%%%%%%%%%%%%%%%%%%%%%%%%%%%%%%%%%%%%%%%%%%%%%%%%%%%%%%%%%%%%%%%%%%%%%%%%%%%%%%%%%%%%%%%%%%%%%%%%%%%%%%%%%%%%%%%
\begin{figure}[h]
\hspace{0.0cm}
\center
\includegraphics[keepaspectratio, scale=0.5]{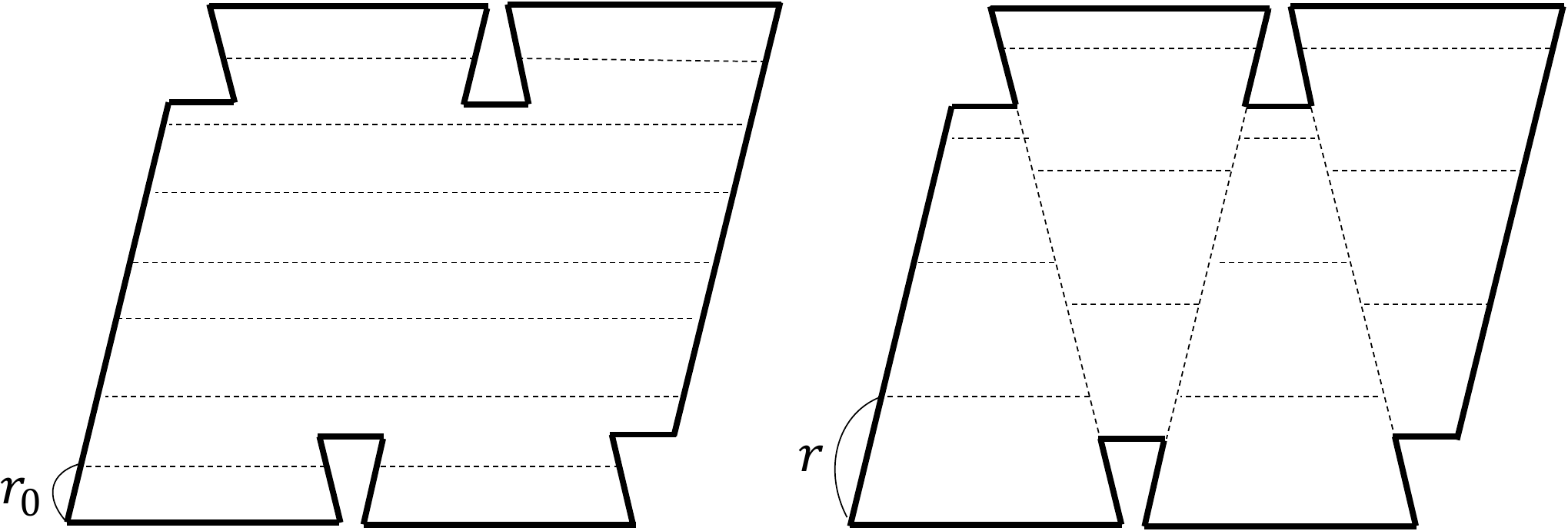}
\caption{Lateral decomposition for background (left), and trapezoid decomposition for perturbation (right).}
\label{trapezoid_case1}
\end{figure}
%%%%%%%%%%%%%%%%%%%%%%%%%%%%%%%%%%%%%%%%%%%%%%%%%%%%%%%%%%%%%%%%%%%%%%%%%%%%%%%%%%%%%%%%%%%%%%%%%%%%%%%%%%%%%%%%%%%%%%%%%
For $r_{0} \in (0,1]$ and $\theta\in (0,\pi)$, we denote 
\begin{equation}\label{function space B_L}
V_{\mathcal{L}_{r_0,\theta}}^{c}:=\left\{ \gamma : \Omega \to \mathbb{R}^{2 \times 2} \,:
\mbox{$\gamma$ is $\mathrm{constant}$ for each $C \in \mathcal{L}_{r_0,\theta}$}\right\}.
\end{equation}
Also, for $r \in (0,1]$, $0<\theta<\pi$, we denote \begin{equation}\label{function space H_T}
V_{\mathcal{T}_{r,\theta}}^{c}:=\left\{ H : \Omega \to \mathbb{R}^{2 \times 2} \,: \mbox{$H$ is $\mathrm{constant}$ for each $C \in \mathcal{T}_{r,\theta}$}
\right\}. 
\end{equation}
Further, we denote 
\[
V_{\mathcal{L}_{r_0,\theta}}^{c, +}:= L^{\infty}_{+}(\Omega) \cap V_{\mathcal{L}_{r_0,\theta}}^{c}\,\,\mbox{and}\,\, V_{\mathcal{T}_{r,\theta}}^{c, +}:= L^{\infty}_{+}(\Omega) \cap V_{\mathcal{T}_{r,\theta}}^{c}
\]
from which we take a background conductivity and have a perturbative conductivity, respectively. Here and hereafter, we use the terms perturbation and perturbative for elements of
$V_{\mathcal{T}_{r,\theta}}^{c, +}$ and even the difference of their elements. 
We now state our second main result, which claims probalistic local uniqueness and Lipschitz stability of the inverse problem on  trapezoid-based decomposed domains.

\begin{theorem}\label{Probabilistic LLS_trap}
Fix $r_{0}, q \in (0,1]$ satisfying the irrational condition, that is, $\frac{r_0}{q} \not \in{\mathbb Q}$, and $\gamma_0 \in \mbox{\rm int}(V_{\mathcal{L}_{r_0,\theta}}^{c,+})$.
For $r \in (0,1]$ and $\theta \in (0, \pi]$, let $Q^{t}_{r, \theta}$ be a proposition stated as follows: 
\begin{equation}
\left\{
\begin{array}{ll}
{}^\exists\epsilon>0:\\
\quad\,\,
{}^\forall\gamma\in\mbox{\rm int}(V^{c,+}_{\mathcal{T}_{r,\theta}})\,\,\mbox{\rm with}\,\,\Vert\gamma-\gamma_0\Vert_\infty\le\epsilon,\\
\quad\,\,\, {}^\exists\delta=\delta(\gamma)>0\,\,\mbox{\rm with}\,\,B_\delta(\gamma)\subset\mbox{\rm int}(V_{\mathcal{T}_{r,\theta}}^{c, +}):
\\
\qquad\,\,\,\,\mbox{\rm such that}\\
\qquad\qquad\Vert\tau-\sigma\Vert_\infty\le C
\Vert F(\tau)-F(\sigma)\Vert,\,\,\tau,\,\sigma\in B_\delta(\gamma)\\
\qquad\qquad\qquad\qquad\quad\,\,\,\mbox{\rm with a constant}\,\,C=C(\gamma)>0,
\end{array}
\right. \label{proposition_LS_trape}
\end{equation}
Furthermore, let $\mathcal{E}^{t}=\{ (r,\theta) \in (0,1]\times(0,\pi]\,: Q^{t}_{r,\theta} \ \text{\rm is true}\}$ be an event. Then
\[
Pr(\mathcal{E}^{t})=1,
\]
where 
\[
Pr:=\frac{1}{\pi} m,
\]
where $m$ is the two-dimensional Lebesgue measure.
\end{theorem}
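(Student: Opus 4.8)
The plan is to mirror the proof of Theorem~\ref{Probabilistic LLS} and reduce the claim to the almost-sure injectivity of the Fréchet derivative $F'(\gamma_0)$, restricted in its domain to the finite-dimensional perturbation space $V^{c}_{\mathcal{T}_{r,\theta}}$. First I would record the standard reduction to injectivity: $F$ is real-analytic on $L^{\infty}_{+}(\Omega)$, and $V^{c}_{\mathcal{T}_{r,\theta}}$ is finite-dimensional, so if $F'(\gamma_0)|_{V^{c}_{\mathcal{T}_{r,\theta}}}$ is injective it is bounded below there; continuity of $\gamma\mapsto F'(\gamma)$ then gives a (halved) lower bound for $F'(\gamma)|_{V^{c}_{\mathcal{T}_{r,\theta}}}$ whenever $\|\gamma-\gamma_0\|_\infty\le\epsilon$, for some $\epsilon>0$; and a Taylor expansion of $F$ about $\sigma$ yields, for $\tau,\sigma$ in a small enough ball $B_{\delta(\gamma)}(\gamma)\subset\mathrm{int}(V^{c,+}_{\mathcal{T}_{r,\theta}})$, the estimate $\|\tau-\sigma\|_\infty\le C\|F(\tau)-F(\sigma)\|$. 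Hence $Q^{t}_{r,\theta}$ holds whenever $F'(\gamma_0)$ is injective on $V^{c}_{\mathcal{T}_{r,\theta}}$, and it remains only to prove that the set of $(r,\theta)\in(0,1]\times(0,\pi]$ for which this fails is Lebesgue-null.

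To prove injectivity for a fixed ``good'' $(r,\theta)$ I would run the exposed-corner argument. The derivative has the bilinear representation $\langle F'(\gamma_0)H\,f,g\rangle=-\int_\Omega H\,\nabla u_f\cdot\nabla u_g\,dx$, where $f,g\in\dot H^{-1/2}_{\diamond}(\overline\Sigma)$ and $u_f,u_g\in H^{1}_{\diamond}(\Omega)$ solve \eqref{EIT BVP} with $\gamma=\gamma_0$; thus $F'(\gamma_0)H=0$ is equivalent to $\int_\Omega H\,\nabla u\cdot\nabla v\,dx=0$ for all such background solutions $u,v$. By Runge approximation from $\Sigma$ — legitimate because $\Omega$ is connected, $\Sigma$ is open, connected and contains corners, and $\gamma_0$ is piecewise constant with flat interfaces — these solutions are dense in $H^{1}_{\mathrm{loc}}$ among local $\gamma_0$-solutions near any point reachable from $\Sigma$. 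Suppose now $H\not\equiv 0$. Then $\mathrm{supp}\,H$ is a nonempty finite union of cells of $\mathcal{T}_{r,\theta}$, and the two-exposed-corner property of the trapezoidal tiling furnishes, in some connected component, a cell $C$ with $H|_{C}\neq 0$ possessing a \emph{double} exposed corner: two vertices of $C$ near each of which $H$ coincides with $\chi_{C}\,H|_{C}$. For a good $(r,\theta)$ this double corner lies in the interior of a background cell $C_0\in\mathcal{L}_{r_0,\theta}$ — this is the point of using two different, non-aligned grids for the background and for the perturbation — so $\gamma_0$ is a constant symmetric positive-definite matrix near it, and after a linear change of variables the background operator is the Laplacian there. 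Feeding into $\int_\Omega H\,\nabla u\cdot\nabla v\,dx=0$ the Runge approximations of singular solutions concentrated at the two vertices (in the spirit of \cite{alessandrini2017uniqueness}) and matching leading-order singularities over the sectors cut by $C$, one extracts a linear system for the three independent entries of $H|_{C}$; two corners of a single cell provide enough independent edge directions to force $H|_{C}=0$, contradicting $H|_{C}\neq 0$. Removing $C$ and repeating on $\mathrm{supp}\,H\setminus C$ gives $H\equiv 0$.

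It remains to bound the exceptional set. As in Theorem~\ref{Probabilistic LLS}, the argument above can only fail if no exposed double corner of $\mathrm{supp}\,H$ can be placed in the interior of a background cell, or if the corner estimate degenerates. The horizontal interfaces of $\mathcal{T}_{r,\theta}$ occur at heights $jr\sin\theta$ ($j\in\mathbb{N}$) and those of $\mathcal{L}_{r_0,\theta}$ at heights $kr_0\sin\theta$ ($k\in\mathbb{N}$), while the horizontal positions of the trapezoid vertices are controlled by $r$ and the slide $q$; the standing hypothesis $r_0/q\notin\mathbb{Q}$ fixes the admissible column arrangement. A vertex of an exposed cell sits on an $\mathcal{L}_{r_0,\theta}$-interface only if $jr=kr_0$ for some $j,k\in\mathbb{N}$, i.e.\ only if $r/r_0\in\mathbb{Q}$ (vertices forced onto $\partial\Omega$ are corners of $\Omega$, lie in $\Sigma$, and are handled by a boundary-determination variant of the same argument); and the singular-solution argument degenerates only for $\theta$ in a fixed at most countable subset of $(0,\pi]$ (where a trapezoidal cell degenerates, e.g.\ $\theta=\pi/2,\pi$). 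Hence $\{(r,\theta):Q^{t}_{r,\theta}\text{ is false}\}$ lies in a countable union of sets $\{r/r_0\in\mathbb{Q}\}\times(0,\pi]$ and $(0,1]\times\{\theta_\ell\}$, each of two-dimensional Lebesgue measure zero, so $Pr(\mathcal{E}^{t})=1$.

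The main obstacle I anticipate is the deterministic heart of the second paragraph: establishing the quantitative corner estimate for the \emph{anisotropic} constant-coefficient operator at a polygonal double corner while using only the localized Neumann-to-Dirichlet data, and verifying that for generic $(r,\theta)$ an exposed double corner of $\mathrm{supp}\,H$ can always be exhibited inside a background cell and connected to $\Sigma$. Once this corner lemma is in place, the reduction and the measure-zero bookkeeping proceed exactly as in the parallelogram-based case.
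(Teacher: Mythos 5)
Your architecture is the same as the paper's: reduce \eqref{proposition_LS_trape} to injectivity of $F'(\gamma_0)$ on the finite-dimensional space $V^{c}_{\mathcal{T}_{r,\theta}}$ (this is Lemma \ref{Deterministic Lipschitz Stability}), prove that injectivity by exposing the two lower corners of a cell of $\mathrm{supp}\,H$ inside a single background cell and testing with Runge approximations of a fundamental solution (Proposition \ref{Local_injectivity_trap}), then discard a Lebesgue-null set of $(r,\theta)$. However, the step you defer as ``the main obstacle'' is precisely the deterministic heart that the paper supplies in Section 4, so as it stands the proof is incomplete: Lemma \ref{Blow up of the integral} (letting the singularity approach first the bottom edge, then the vertex) yields $h_{11}+h_{22}=0$ and $\tilde k h_{11}+h_{12}=0$ with $\tilde k=1/\tan\tilde\theta$, and Lemma \ref{Blow up of the integral_trap} (reflecting to the second exposed corner) adds $\tilde k h_{11}-h_{12}=0$; these force $H|_{C_H}=0$ only when $\tilde k\neq 0$. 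This exposes a substantive inaccuracy in your bookkeeping: the degeneracy is not ``where a trapezoidal cell degenerates, e.g.\ $\theta=\pi/2,\pi$,'' but where the corner angle \emph{after} the $\gamma_0|_{C_\gamma}$-dependent linear change of variables becomes a right angle, i.e.\ where condition \eqref{assumption_trape} fails. That set depends on $\gamma_0$, not only on the tiling: $\theta=\pi/2$ is harmless whenever $\gamma_0|_C$ is not diagonal, while other angles $\theta$ can be bad for anisotropic $\gamma_0|_C$. So the deterministic injectivity claim, with your exceptional set, is false for some non-excluded $(r,\theta)$; the almost-sure conclusion survives because \eqref{assumption_trape} fails for only finitely many $\theta$ per background cell, but the condition must be stated correctly.

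A second genuine gap is your treatment of cells of $\mathrm{supp}\,H$ meeting $\partial\Omega$. Your parenthetical asserts that vertices forced onto $\partial\Omega$ are corners of $\Omega$, lie in $\Sigma$, and can be handled by a ``boundary-determination variant''; but the exposed lower corners of a bottom-row cell are in general interior points of a boundary edge, need not lie in $\Sigma$ at all (where no data is prescribed), and no such variant is provided. The paper avoids this case entirely via the extension argument of Remark \ref{whole_domain2}: extend $\Omega$ to $\Omega_E$, extend $\gamma_0$ and extend $H$ by zero, observe that the integral identity $\int H_E\nabla w\cdot\nabla w\,dx=0$ persists for solutions driven from $\Sigma_E$, and thereby reduce to $\mathrm{supp}\,H\cap\Sigma=\emptyset$, so that (using both irrationality conditions) the two exposed corners always sit in the interior of a single background cell where $\gamma_0$ is constant. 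You need either to adopt this extension argument or to actually prove the boundary variant you invoke; without one of these, the case $\mathrm{supp}\,H$ touching $\partial\Omega$ is not covered.
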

\vspace{5pt}
In the parallelogram decomposition, the perturbative function space is defined by specifying the sequence of rotation angles $\{ \phi_c\}$ which correspond to the angles of the symmetry axes of the perturbative conductivity, cell-wise, $H\big|_{C},\,C\in\mathcal{P}_{r,\theta}$ with respect to the axis of the reference coordinates. It is unnecessary to specify these symmetry axes for the trapezoidal decomposition, and, hence, the second main result is stronger than the first one.
This is because we can use two lower corners of each cell of $H$ which are included in the interior of a single cell for the background conductivity, while in the parallelogram division, we can only use a single corner.
Although we gave the decomposition as in Figure \ref{trapezoid_case1}, we can also give the decomposition as in Figure \ref{trapezoid_case2} which allows us to have vertical divisions associated to the conductivities. Indeed, yet other decompositions may be considered, while in this paper we emphasize the principles rather than exhaustively showing the examples of decompositions.

\begin{remark}${}$
By using Theorem 2 in \cite{alberti2020infinitedimensional}, we can show the probabilistic local Lipschitz stability such as Theorems \ref{Probabilistic LLS} and \ref{Probabilistic LLS_trap} with finite measurements, which is the localized Neumann-to-Dirichlet operator projected onto some finite rank space.
\end{remark}

%%%%%%%%%%%%%%%%%%%%%%%%%%%%%%%%%%%%%
\begin{figure}[h]
\center
\includegraphics[keepaspectratio, scale=0.5]{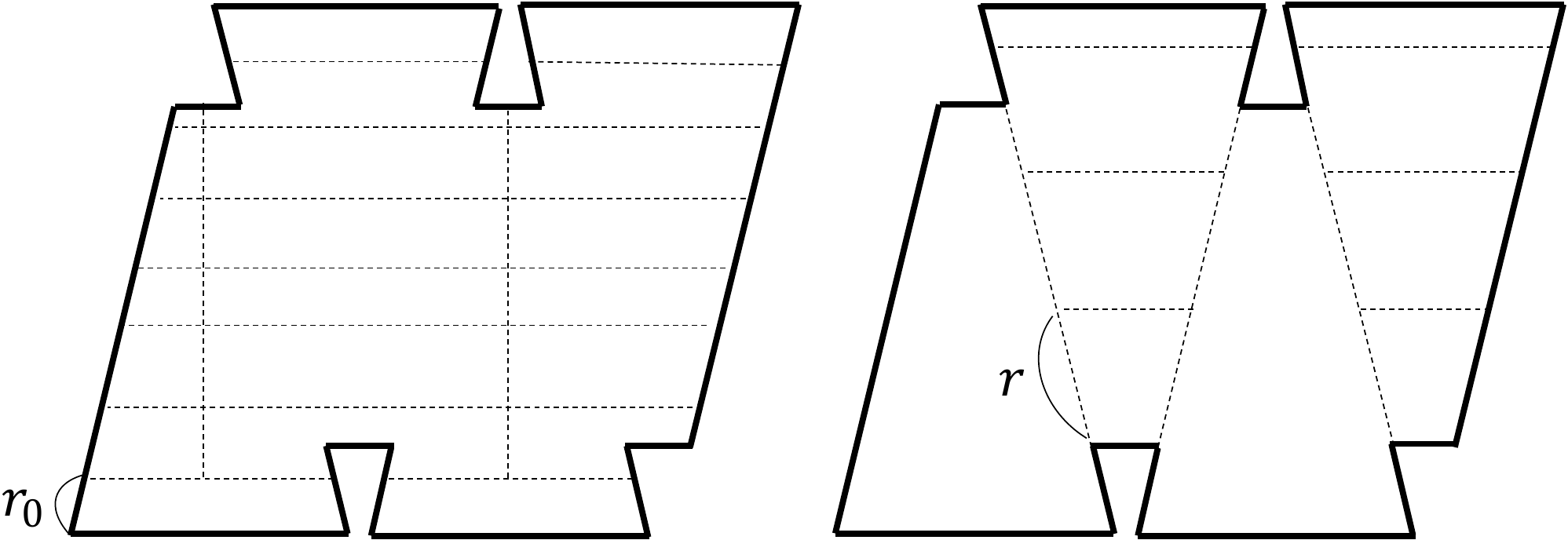}
\caption{Another decomposition for background (left) and  perturbation (right).}
\label{trapezoid_case2}
\end{figure}
%%%%%%%%%%%%%%%%%%%%%%%%%%%%%%%%%%%%%
%%%%%%%%%%%%%%%%%%%%%%%%%%%%%%%%%%%%%%

\subsection{Local recovery and advantage of using distinct decompositions}

In this subsection, we first point out that we can have a local recovery result for EIT.
% \begin{theorem}{\rm (local recovery)}
More precisely, we have the probabilistic convergence of the Levenberg–Marquardt iteration scheme and Landweber iteration scheme under the settings of the two main results.
% We can even have the speed of convergence of the Landweber iteration scheme ({\color{blue} Give a reference}).
% \end{theorem}
This is because Lipschitz stability implies the so-called tangential cone condition, which is known as a sufficient condition for the convergence of these schemes (see e.g., \cite{hanke1997regularizing, kaltenbacher2008iterative}). 

Next, we summarize the key ideas behind the proofs of our main results. We choose $\gamma_0$ and $\gamma$ from different function spaces. More precisely, $\gamma_0\in \mbox{Int}(V_{\mathcal{Q}}^{c,+})$, $\gamma\in \mbox{Int}(V_{\mathcal{R}}^{c,+})$,
where
$$
\left\{
\begin{array}{ll}
\mathcal{Q}=\mathcal{P}_{r_0,\theta},\,\,\mathcal{R}=\mathcal{P}_{r,\theta,\{\phi_c\}}\,\,&\text{for the parallelogram decomposition},\\
\mathcal{Q}=\mathcal{L}_{r_0,\theta},\,\,\mathcal{R}=\mathcal{T}_{r,\theta}
\,\,&\text{for the trapezoidal decomposition}
\end{array}
\right.
$$
with $r,\,r_0$ subject to the {\sl irrationality condition}  $\frac{r}{r_0} \not\in {\mathbb Q}$ and an extra irrationality condition $\frac{q}{r_0}\not \in{\mathbb Q}$ for the trapezoidal decomposition. These irrationality conditions are essential in the following sense. The corner points of the cells in $\mathcal{R}$ are in the interior of a cell in $\mathcal{Q}$ except for corner points on $\partial\Omega$. Then, with the help of the extension argument (see Remark \ref{whole_domain2} given later), the injectivity of the Fr\'echet derivative $F'(\gamma_0)$ of the forward operator $F$ can be proven using the singularity of a fundamental solution for the operator $\nabla\cdot(\gamma_0\nabla\,)$ near the aforementioned corner points. If we don't have such a situation, the structure of the singularity of the fundamental solution becomes exceedingly complicated near those corner points. In view of the irrationality conditions, we state our main results in a  probabilistic framework.

\medskip

The remainder of this paper is organized as follows. In Section~2, we summarize the properties of the forward operator $F$ and prove the probabilistic local Lipschitz stability for the inverse problem assuming the conditional injectivity of Fr\'echet derivative of the forward operator. In Section~3, we prove this injectivity for the parallelogram- and trapezoid-based decompositions as stated in Propositions~\ref{Local_injectivity} and \ref{Local_injectivity_trap}, respectively. Section 4 is devoted to proving two technical lemmas used in the previous section. Then, we end with some concluding remarks and proposing some generalizations of our results.

\section{Local Lipschitz stability}

The purpose of this section is to prove Lemma \ref{Deterministic Lipschitz Stability} given below which states that the conditional injectivity of the Fr\'echet derivative $F^{\prime}$ of forward operator $F$ implies the local Lipschitz stability for the forward operator.
Before the proof, we first review the following statements which can be proved by the same arguments given in sections 2 and 3 of \cite{lechleiter2008newton}.

\begin{lemma}\label{Frechet differentiable}
\begin{description}
\item[(1)] $F$ is  Fr\'echet differentiable at each $\gamma \in \mathrm{int}(L^{\infty}_{+}(\Omega))$ with Fr\'echet derivative $F^{\prime}(\gamma) \in \mathcal{L}( L^{\infty}(\Omega), \mathcal{L}(\dot{H}^{- 1/2}_{\diamond}(\overline{\Sigma}),\bar{H}^{1/2}_{\diamond}(\Sigma) ) ) $ given by
\[
F^{\prime}(\gamma)[H]f:=u^{\prime}\bigl|_{\Sigma},\,\, H\in L^\infty(\Omega)
\]
where $u^{\prime} \in H^{1}_{\diamond}(\Omega)$ is the unique solution for
\begin{equation}
\int_{\Omega} \gamma \nabla u^{\prime} \cdot \nabla \varphi dx = -\int_{\Omega} H \nabla u \cdot \nabla \varphi dx \ \ \mathrm{for} \ \mathrm{all} \ \varphi \in H^{1}_{\diamond}(\Omega), \label{def_of_Fre}
\end{equation}
with the solution $u$ of (\ref{EIT BVP}). 
%%%%%%%%%%%%%%%%%%%%%%%%%%%%%%%%%%%%%%%%%%%%%%%%%%
\item[(2)] There exist some constants $C_{1},...,C_{4} >0$ independent of $\tau, \sigma$ such that 
\begin{equation}
\left\| F(\tau)\right\| \leq C_{1}, \label{Fieq1}
\end{equation}
%%%%%%%%%%%%%%%%%%%%%%%%%%%%%%%%%%%%%%%%%%%%%%%%%%
\begin{equation}
\left\| F(\tau)-F(\sigma) \right\| \leq C_{2} \left\| \tau - \sigma \right\|_{\infty}, \label{Fieq2}
\end{equation}
%%%%%%%%%%%%%%%%%%%%%%%%%%%%%%%%%%%%%%%%%%%%%%%%%%
\begin{equation}
\left\| F(\tau)-F(\sigma) -F^{\prime}(\sigma)[\tau - \sigma] \right\| \leq C_{3} \left\| \tau - \sigma \right\|^{2}_{\infty}, \label{Fieq3}
\end{equation}
%%%%%%%%%%%%%%%%%%%%%%%%%%%%%%%%%%%%%%%%%%%%%%%%%%
\begin{equation}
\left\| F^{\prime}(\tau)-F^{\prime}(\sigma) \right\| \leq C_{4} \left\| \tau - \sigma \right\|_{\infty}\label{Fieq4},
\end{equation}
%%%%%%%%%%%%%%%%%%%%%%%%%%%%%%%%%%%%%%%%%%%%%%%%%%
for $\tau, \sigma \in L^{\infty}_{+}(\Omega)$.
\end{description}
\end{lemma}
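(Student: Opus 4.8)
The plan is to follow the standard variational approach used in \cite{lechleiter2008newton}, adapted to the localized Neumann-to-Dirichlet setting. For part \textbf{(1)}, I would first note that for $\gamma\in\mathrm{int}(L^\infty_+(\Omega))$ and $f\in\dot H^{-1/2}_\diamond(\overline\Sigma)$, the weak formulation of \eqref{EIT BVP}, namely $\int_\Omega\gamma\nabla u\cdot\nabla\varphi\,dx=\langle f,\varphi\rangle$ for all $\varphi\in H^1_\diamond(\Omega)$, is uniquely solvable by Lax--Milgram (coercivity from $\gamma\ge\delta_0 I$, continuity from the $L^\infty$ bound), with $\|u\|_{H^1}\le C\|f\|_{H^{-1/2}}$. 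Then, for a perturbation $H\in L^\infty(\Omega)$ with $\|H\|_\infty$ small enough that $\gamma+H\in\mathrm{int}(L^\infty_+(\Omega))$, let $u_H$ solve the problem with conductivity $\gamma+H$ and set $w:=u_H-u-u'$, where $u'$ is defined by \eqref{def_of_Fre}. Subtracting the weak formulations one checks that $w$ satisfies $\int_\Omega(\gamma+H)\nabla w\cdot\nabla\varphi\,dx=-\int_\Omega H\nabla(u_H-u)\cdot\nabla\varphi\,dx$. Coercivity of $\gamma+H$ together with the a priori bound $\|u_H-u\|_{H^1}\le C\|H\|_\infty\|f\|_{H^{-1/2}}$ (obtained the same way, by testing the difference equation against $u_H-u$) gives $\|w\|_{H^1}\le C\|H\|_\infty^2\|f\|_{H^{-1/2}}$. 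Taking the trace on $\Sigma$, which is bounded from $H^1_\diamond(\Omega)$ into $\overline H^{1/2}_\diamond(\Sigma)$, yields Fr\'echet differentiability with the claimed derivative, and linearity and boundedness of $H\mapsto F'(\gamma)[H]$ follow from the same energy estimate applied to \eqref{def_of_Fre} directly.

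For part \textbf{(2)}, \eqref{Fieq1} is immediate: $\|F(\tau)f\|_{H^{1/2}}=\|u|_\Sigma\|\le\|u\|_{H^1}\le C\delta_0^{-1}\|f\|_{H^{-1/2}}$, so $\|F(\tau)\|\le C_1$ uniformly. For \eqref{Fieq2}, with $u_\tau,u_\sigma$ the respective solutions, the difference $v:=u_\tau-u_\sigma$ solves $\int_\Omega\tau\nabla v\cdot\nabla\varphi\,dx=-\int_\Omega(\tau-\sigma)\nabla u_\sigma\cdot\nabla\varphi\,dx$; testing with $\varphi=v$ and using coercivity gives $\|v\|_{H^1}\le C\delta_0^{-1}\|\tau-\sigma\|_\infty\|u_\sigma\|_{H^1}\le C\|\tau-\sigma\|_\infty\|f\|_{H^{-1/2}}$, hence \eqref{Fieq2}. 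For \eqref{Fieq3}, this is exactly the second-order remainder estimate established in the proof of part (1) with $\gamma=\sigma$, $H=\tau-\sigma$: $\|(F(\tau)-F(\sigma)-F'(\sigma)[\tau-\sigma])f\|\le C_3\|\tau-\sigma\|_\infty^2\|f\|_{H^{-1/2}}$. For \eqref{Fieq4}, given $f$ let $u'_\tau,u'_\sigma$ solve \eqref{def_of_Fre} with base conductivities $\tau,\sigma$ and common perturbation $H$; writing the difference equation for $u'_\tau-u'_\sigma$ and also using \eqref{Fieq2} to control $\|u_\tau-u_\sigma\|_{H^1}$, one gets $\|u'_\tau-u'_\sigma\|_{H^1}\le C\|\tau-\sigma\|_\infty\|H\|_\infty\|f\|_{H^{-1/2}}$ after collecting the two source terms (one from the difference of base conductivities acting on $\nabla u'_\sigma$, one from $H$ acting on $\nabla(u_\tau-u_\sigma)$), which yields \eqref{Fieq4}.

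The only genuinely delicate point, and the main thing to be careful about, is the mapping properties of the localized spaces: one must verify that the duality pairing $\langle f,\varphi\rangle$ makes sense and is continuous for $f\in\dot H^{-1/2}_\diamond(\overline\Sigma)$ against $\varphi\in H^1_\diamond(\Omega)$ (via the trace $H^1_\diamond(\Omega)\to H^{1/2}(\partial\Omega)$ and $\mathrm{supp}\,f\subset\overline\Sigma$), and that restriction to $\Sigma$ is bounded into $\overline H^{1/2}_\diamond(\Sigma)$ with the quotient norm; the compatibility conditions $\langle f,1\rangle=0$ and $\int_{\partial\Omega}u\,ds=0$ are what make the quotient by constants well defined and the solution unique. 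All the remaining steps are routine Lax--Milgram energy estimates, and since the statement explicitly says it follows by the arguments of \cite{lechleiter2008newton}, I would present the above as a sketch and refer to that reference for the details concerning the function-space bookkeeping.
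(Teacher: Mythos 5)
Your sketch is correct and takes essentially the same route the paper relies on: the paper gives no proof of this lemma beyond citing Sections 2--3 of \cite{lechleiter2008newton}, whose argument is exactly the Lax--Milgram/energy-estimate scheme you outline (including the uniformity of the constants, which depend only on $\delta_0$, $\Omega$, $\Sigma$). One minor slip: for $w=u_H-u-u'$ the identity should be $\int_\Omega\gamma\nabla w\cdot\nabla\varphi\,dx=-\int_\Omega H\nabla(u_H-u)\cdot\nabla\varphi\,dx$, or equivalently $\int_\Omega(\gamma+H)\nabla w\cdot\nabla\varphi\,dx=-\int_\Omega H\nabla u'\cdot\nabla\varphi\,dx$ (you mixed the two), but either version gives the same quadratic remainder bound, so the conclusion is unaffected.
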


\medskip
\vspace{5pt}
We will show the following local Lipschitz stability by assuming the injectivity of the Fr\'{e}chet derivative. 
The idea of the proof is to modify the argument of Theorem 3.4 in \cite{lechleiter2008newton}. 
To proceed further, let $V_{b}$ and $V_{p}$ be subspaces of $L^{\infty}(\Omega; \mathbb{R}^{2 \times 2})$ which correspond to the spaces for the background and perturbation, respectively. We also denote $V_{b}^{+}:=V_{b} \cap L^{\infty}_{+}(\Omega)$ and $V_{p}^{+}:=V_{p} \cap L^{\infty}_{+}(\Omega)$.
\begin{lemma}\label{Deterministic Lipschitz Stability}
Let $\gamma_0 \in V_{b}^{+}$.
Assume that the injectivity of the Fr\'{e}chet derivative holds, that is, the following minimum is positive:
\begin{equation}
\min\{\left\|F^{\prime}(\gamma_{0})[H]\right\| \,: H \in V_{p}, \ \left\| H \right\|_{\infty}=1 \}>0.
\label{inj_Fre_der}
\end{equation}
Then, there exists $\epsilon=\epsilon(\gamma_{0})>0$ such that for any $\gamma \in \mathrm{int}(V^{+}_p)$ with $\left\| \gamma_{0}-\gamma \right\|_{\infty} \leq \epsilon$,
\[
\left\| \tau - \sigma \right\|_{\infty} \leq C \left\|F(\tau) - F(\sigma) \right\|, \ \  \tau, \sigma \in B_{\delta}(\gamma),
\]
holds for some $C=C(\gamma_{0},\gamma)>0$ and $\delta=\delta(\gamma_{0},\gamma)>0$ with $B_{\delta}(\gamma) \subset \mathrm{int}(V_{p}^{+})$.

\end{lemma}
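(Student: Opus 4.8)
The plan is to carry out the modification of Theorem~3.4 of \cite{lechleiter2008newton} announced above, using only the quantitative injectivity hypothesis (\ref{inj_Fre_der}) together with the Lipschitz-type estimates (\ref{Fieq3}) and (\ref{Fieq4}) of Lemma~\ref{Frechet differentiable}, while tracking three small parameters. Set
\[
c_{0}:=\min\{\,\|F^{\prime}(\gamma_{0})[H]\|:H\in V_{p},\ \|H\|_{\infty}=1\,\},
\]
which is positive by assumption. Since $V_{p}$ is a linear subspace and $H\mapsto F^{\prime}(\gamma_{0})[H]$ is linear, homogeneity immediately upgrades this to $\|F^{\prime}(\gamma_{0})[H]\|\ge c_{0}\|H\|_{\infty}$ for all $H\in V_{p}$. (In the present setting $V_{p}$ is finite-dimensional, so the minimum is indeed attained, but only this homogeneous bound will be used.)

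First I would propagate the lower bound to conductivities $\sigma$ near $\gamma_{0}$. For $\sigma\in L^{\infty}_{+}(\Omega)$ and $H\in V_{p}$, estimate (\ref{Fieq4}) gives
\[
\|F^{\prime}(\sigma)[H]\|\ \ge\ \|F^{\prime}(\gamma_{0})[H]\|-\|F^{\prime}(\sigma)-F^{\prime}(\gamma_{0})\|\,\|H\|_{\infty}\ \ge\ \bigl(c_{0}-C_{4}\|\sigma-\gamma_{0}\|_{\infty}\bigr)\|H\|_{\infty},
\]
so $\|F^{\prime}(\sigma)[H]\|\ge\tfrac{c_{0}}{2}\|H\|_{\infty}$ for all $H\in V_{p}$ as soon as $\|\sigma-\gamma_{0}\|_{\infty}\le c_{0}/(2C_{4})$. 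Next, for $\tau,\sigma\in V_{p}^{+}$ — so that $\tau-\sigma\in V_{p}$, as $V_{p}$ is a subspace — estimate (\ref{Fieq3}) and the reverse triangle inequality yield
\[
\|F(\tau)-F(\sigma)\|\ \ge\ \|F^{\prime}(\sigma)[\tau-\sigma]\|-C_{3}\|\tau-\sigma\|_{\infty}^{2}\ \ge\ \Bigl(\tfrac{c_{0}}{2}-C_{3}\|\tau-\sigma\|_{\infty}\Bigr)\|\tau-\sigma\|_{\infty}
\]
whenever $\|\sigma-\gamma_{0}\|_{\infty}\le c_{0}/(2C_{4})$. If in addition $\|\tau-\sigma\|_{\infty}\le c_{0}/(4C_{3})$, the right-hand side is at least $\tfrac{c_{0}}{4}\|\tau-\sigma\|_{\infty}$, which is the asserted estimate with $C=4/c_{0}$.

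Then I would fix the parameters. Put $\epsilon=\epsilon(\gamma_{0}):=c_{0}/(4C_{4})$. Given $\gamma\in\mathrm{int}(V_{p}^{+})$ with $\|\gamma_{0}-\gamma\|_{\infty}\le\epsilon$, choose $\delta=\delta(\gamma_{0},\gamma)>0$ so small that (i) $B_{\delta}(\gamma)\subset\mathrm{int}(V_{p}^{+})$, which is possible since $\gamma\in\mathrm{int}(V_{p}^{+})$; (ii) $C_{4}\delta\le c_{0}/4$, so that each $\sigma\in B_{\delta}(\gamma)$ obeys $\|\sigma-\gamma_{0}\|_{\infty}\le\epsilon+\delta\le c_{0}/(2C_{4})$; and (iii) $2C_{3}\delta\le c_{0}/4$, so that each pair $\tau,\sigma\in B_{\delta}(\gamma)$ obeys $\|\tau-\sigma\|_{\infty}\le 2\delta\le c_{0}/(4C_{3})$. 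Applying the two displayed inequalities to all $\tau,\sigma\in B_{\delta}(\gamma)$ then gives $\|\tau-\sigma\|_{\infty}\le\frac{4}{c_{0}}\|F(\tau)-F(\sigma)\|$, completing the argument.

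As for difficulty, this lemma is a soft consequence of the estimates already recorded in Lemma~\ref{Frechet differentiable}; the genuine work of the paper is establishing the positivity of the minimum in (\ref{inj_Fre_der}), which is done in Section~3 for the two decompositions. The only point needing care here is that the whole argument controls only differences $\tau-\sigma$ lying in the linear space $V_{p}$, which is exactly why stability is asserted for $\tau,\sigma$ in a ball contained in $\mathrm{int}(V_{p}^{+})$ and is used through the subspace property of $V_{p}$; one should also keep visible that, although $C_{3}$ and $C_{4}$ are uniform, the admissible radius $\delta$ — and hence $C$ — depends on $\gamma$ and degenerates as $\gamma$ approaches the boundary of $V_{p}^{+}$.
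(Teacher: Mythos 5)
Your proposal is correct and follows essentially the same route as the paper's proof: it lower-bounds $\|F'(\sigma)[\tau-\sigma]\|$ via the quantitative injectivity at $\gamma_0$ together with estimate (\ref{Fieq4}), then passes to $\|F(\tau)-F(\sigma)\|$ via the Taylor-remainder bound (\ref{Fieq3}), choosing $\epsilon$ and $\delta$ small accordingly. Your version merely makes the constants explicit (e.g.\ $C=4/c_0$) and combines the two estimates by a direct reverse triangle inequality rather than the paper's intermediate quantities $D$ and $E$, which is an inessential difference.
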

%%%%%%%%%%%%%%%%%%%%%%%%%%%%%%%%%%%%%%%%%%%%%%%%%%%%%%%%%%%%%%%%%%%%%%%%%%%%%%%%%%%%%%%%%%%%%%%%%%%%%%%%%%%%%%%%%%%%%%%%%%%%%%%%%%%%%%%%%%%%%%%%%%%%%%%%%%%%%%%%%%%%%%%%%%%%%%%%%%%%%%%%%%%%%%%%%%%%%%%%%%%
\begin{proof}
Let
\begin{equation}\label{positivity of C}
C_{\gamma_{0}}:=\min\{\left\|F^{\prime}(\gamma_{0})[H]\right\| \,: H \in V_{p}, \ \left\| H \right\|_{\infty}=1 \}>0
\end{equation}
and take $\epsilon, \delta>0$ small enough which will be specified later. Also, let $\tau, \sigma \in B_{\delta}(\gamma)$. Then using \eqref{positivity of C}, we evaluate
\begin{equation}
\begin{split}
& \left\| F^{\prime}(\sigma)[\tau - \sigma]\right\| = \left\| F^{\prime}(\gamma_{0})[\tau - \sigma]+ F^{\prime}(\sigma)[\tau - \sigma]-F^{\prime}(\gamma_{0})[\tau - \sigma]\right\|
\\
& \geq 
\frac{\left\| F^{\prime}(\gamma_{0})[\tau - \sigma] \right\|}{\left\| \tau - \sigma \right\|_{\infty}} \left\| \tau - \sigma \right\|_{\infty} - \left\| F^{\prime}(\sigma)[\tau - \sigma]-F^{\prime}(\gamma_{0})[\tau - \sigma]\right\|
\\
& \geq 
C_{\gamma_{0}}\left\| \tau - \sigma \right\|_{\infty} - C_{4}\underbrace{\left\| \sigma - \gamma_{0} \right\|_{\infty}}_{\leq \left\| \sigma - \gamma \right\|_{\infty}+\left\| \gamma - \gamma_{0} \right\|_{\infty}}\left\| \tau - \sigma \right\|_{\infty}
\\ 
& \geq 
\underbrace{\left( C_{\gamma_{0}} - C_{4}(\delta + \epsilon) \right)}_{=:D=D(\gamma_{0}, \gamma)} \left\| \tau - \sigma \right\|_{\infty}, \label{LLS1}
\end{split}
\end{equation}
where $C_{4}>0$ is the constant given in Lemma \ref{Frechet differentiable} which is independent of $\gamma$ , $\gamma_{0}$, $\tau$ and $\sigma$. Here we choose $\epsilon=\epsilon(\gamma_{0})>0$ and  $\delta=\delta(\gamma, \gamma_{0})>0$ to satisfy $C_{\gamma_{0}} - C_{4}\epsilon > 0$
and $D=D(\gamma_{0}, \gamma)=C_{\gamma_{0}} - C_{4}(\delta+\epsilon)> 0$, respectively.
 
By (\ref{Fieq3}) and (\ref{LLS1}), we have
\begin{equation}
\begin{split}
& \left\| F(\tau)-F(\sigma) -F^{\prime}(\sigma)[\tau - \sigma] \right\| \leq C_{3} \frac{\left\|F^{\prime}(\sigma)[\tau - \sigma] \right\|}{\left\|F^{\prime}(\sigma)[\tau - \sigma] \right\|} \left\| \tau - \sigma \right\|^{2}_{\infty}  
\\
& \leq 
\frac{C_3}{D(\gamma_{0}, \gamma)} \left\|F^{\prime}(\sigma)[\tau - \sigma] \right\|\left\| \tau - \sigma \right\|_{\infty}, \label{LLS2}
\end{split}
\end{equation}
where $C_{3}>0$ is the constant given in Lemma \ref{Frechet differentiable} which is independent of $\gamma$ , $\gamma_{0}$, $\tau$ and $\sigma$.
From (\ref{LLS2}) we obtain
\[
\begin{split}
& \left\| F^{\prime}(\sigma)[\tau - \sigma] \right\| - \left\| F(\tau)-F(\sigma)\right\|
\\
& \leq
\left\| F(\tau)-F(\sigma) -F^{\prime}(\sigma)[\tau - \sigma] \right\| \leq \frac{C_3}{D(\gamma_{0}, \gamma)} \left\|F^{\prime}(\sigma)[\tau - \sigma] \right\| \underbrace{\left\| \tau - \sigma \right\|_{\infty}}_{\leq 2\delta},
\end{split}
\]
which implies that
\begin{equation}
\underbrace{\left(1-\frac{2C_3 \delta}{D(\gamma_{0}, \gamma)} \right)}_{=:E=E(\gamma_{0}, \gamma)} \left\|F^{\prime}(\sigma)[\tau - \sigma] \right\| \leq \left\| F(\tau) - F(\sigma) \right\|. \label{LLS3}
\end{equation}
If necessary, we choose $\delta=\delta(\gamma, \gamma_{0})>0$ smaller such that $E(\gamma_{0}, \gamma) > 0$. 
Therefore by (\ref{LLS1}) and (\ref{LLS3}), we have  
\[
\left\| \tau - \sigma \right\|_{\infty} \leq \frac{1}{D(\gamma_{0}, \gamma)E(\gamma_{0}, \gamma)} \left\|F(\tau) - F(\sigma) \right\|.
\]
\end{proof}

\section{Proof of conditional injectivity of the Fr\'{e}chet derivative} 
\label{Core lemma}

The task in this section is to analyze when we can have the conditional injectivity of the Fr\'echet derivative $F^{\prime}$ of the forward operator $F$ for the parallelogram and trapezoidal decompositions. By applying the two core Lemma \ref{Blow up of the integral} and Lemma \ref{Blow up of the integral_trap} given later in Section 4, we can achieve this task. As a result, it can be seen that our first two main results hold.

%%%%%%%%%%%%%%%%%%%%%%%%%%%%%%%%%%%%%%%%%%%%%%%%%%%%%%%%%%%%%%%%%%%%%%%
\subsection{Parallelogram decomposition}\label{Parallelogram decomposition}
\begin{proposition} \label{Local_injectivity}
Let $\theta\in(0,\pi)$, $r_{0} \in (0,1]$, and $\gamma_0 \in \mbox{\rm int}(V_{\mathcal{P}_{r_0,\theta}}^{c,+})$.
Assume that $r,r_{0} \in (0,1]$ satisfy the irrational condition, that is, $\frac{r}{r_0}\not\in{\mathbb Q}$.
Also, assume that $\vec{\phi}=(\phi_1, \phi_2, ... ) \in \prod_{i \in \mathbb{N}} (0, 2 \pi]$ satisfies
\begin{equation}
2 \phi_{i} + \alpha_{c} \neq 0, \pi, 2\pi, 3\pi, 4\pi, 5\pi, 
\label{assumption_for_phi}
\end{equation}
for all $i \in \mathbb{N}$ and $C \in \mathcal{P}_{r_0,\theta}$, where some $\alpha_{c}$ depends on $\gamma_{0}\bigl|_{C}$ (for the definition of $\alpha_c$, see (\ref{a_b}), (\ref{determinant}), (\ref{p_q}) in the proof).
Then, we have
\begin{equation}\label{injectivity}
\min\{\left\|F^{\prime}(\gamma_{0})[H]\right\| \,: H \in V_{\mathcal{P}_{r,\theta}, \{\phi_{c}\}}^{c}, \ \left\| H \right\|_{\infty}=1 \}>0.
\end{equation}
\end{proposition}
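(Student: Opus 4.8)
Since $\mathcal P_{r,\theta}$ is a finite family, $V^{c}_{\mathcal P_{r,\theta},\{\phi_c\}}$ is finite dimensional, so the unit sphere $\{\|H\|_\infty=1\}$ in it is compact and, $F'(\gamma_0)$ being bounded linear, the minimum in \eqref{injectivity} is attained. The plan is to argue by contradiction: were that minimum $0$, there would exist $H_*\in V^{c}_{\mathcal P_{r,\theta},\{\phi_c\}}$ with $\|H_*\|_\infty=1$ and $F'(\gamma_0)[H_*]=0$. Combining Lemma~\ref{Frechet differentiable}(1) with the weak formulations of the two Neumann problems gives the polarization identity
\[
\langle g,\, F'(\gamma_0)[H_*]f\rangle=-\int_\Omega H_*\,\nabla u_f\cdot\nabla u_g\,dx,
\]
where $u_f,u_g\in H^{1}_{\diamond}(\Omega)$ solve $\nabla\cdot\gamma_0\nabla u=0$ with Neumann data $f,g\in\dot{H}^{-1/2}_{\diamond}(\overline{\Sigma})$; hence $\int_\Omega H_*\,\nabla u_f\cdot\nabla u_g\,dx=0$ for all such $f,g$. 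By the Runge approximation property together with the extension argument of Remark~\ref{whole_domain2}, one may legitimately take the test functions $u_f,u_g$ to be, near a point of interest, the fundamental solution $\Psi_z$ of $\nabla\cdot(\gamma_0\nabla\,)$ with pole $z$ (and its derivatives), with $z$ tending to that point.

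It remains to choose the point. Let $\Sigma_{H_*}$ be the union of those cells of $\mathcal P_{r,\theta}$ on which $H_*\ne 0$. By the one-exposed-corner property of the parallelogram decomposition, a connected component of $\Sigma_{H_*}$ has a vertex $x_0$ lying on exactly one such cell $C^*$, so that in a neighborhood of $x_0$ one has $H_*=\mathbf{1}_{W}M$ with $M:=H_*|_{C^*}\ne 0$ a constant symmetric matrix and $W$ an angular sector of opening $\theta$ or $\pi-\theta$. The irrationality condition $r/r_0\notin\mathbb Q$ forces $x_0$ off every interface of $\mathcal P_{r_0,\theta}$ interior to $\Omega$, so $\gamma_0$ is constant, equal to $A:=\gamma_0|_{C_0}$ for the background cell $C_0\ni x_0$, on a neighborhood of $x_0$ (a one-sided neighborhood when $x_0\in\partial\Omega$, which is the case handled by the extension argument); consequently the singularity of $\Psi_z$ at $x_0$ is that of the fundamental solution $\Phi_A(\cdot-z)$ of $\nabla\cdot(A\nabla\,)$, up to smoother terms.

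The heart of the proof is then a local blow-up analysis at $x_0$, carried out in Lemma~\ref{Blow up of the integral} of Section~4. Taking $z\to x_0$ from a direction outside $\overline W$ and passing to the limit in the identity above gives $\int_\Omega H_*\,\nabla\Psi_z\cdot\nabla\Psi_z\,dx=0$ (and likewise with one or both factors replaced by derivatives of $\Psi_z$); isolating a small ball $B_\rho(x_0)$, the contribution of $\Omega\setminus B_\rho(x_0)$ stays bounded, while Lemma~\ref{Blow up of the integral} shows that the contribution of $W\cap B_\rho(x_0)$ diverges, with leading part $L^{(k)}(A,M,W)$ times an explicit diverging factor as $z\to x_0$. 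Letting $z\to x_0$ therefore forces $L^{(k)}(A,M,W)=0$ for every admissible order $k$. Writing $M=R_{\phi_{C^*}}^{T}\mathrm{diag}(h_{1,C^*},h_{2,C^*})R_{\phi_{C^*}}$ and diagonalizing $A$, each $L^{(k)}$ becomes a linear functional of $(h_{1,C^*},h_{2,C^*})$ whose coefficients are the trigonometric quantities that produce $\alpha_{C_0}$ from $\gamma_0|_{C_0}$ as in \eqref{a_b}, \eqref{determinant}, \eqref{p_q}; an elementary computation shows that the common kernel of these functionals is $\{0\}$ whenever $2\phi_{C^*}+\alpha_{C_0}\notin\{0,\pi,2\pi,3\pi,4\pi,5\pi\}$, which is exactly what assumption \eqref{assumption_for_phi} guarantees. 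Hence $M=0$, contradicting $H_*|_{C^*}\ne 0$, and \eqref{injectivity} follows.

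I expect the real difficulty to be concentrated in the blow-up lemma and its application: first, making the singular test functions rigorous, i.e.\ showing that the error incurred in replacing exact loc\_ND solutions by $\Psi_z$ (and its derivatives), together with the far-field part of the integral, is dominated by the blow-up rate --- this is where the extension argument and the quantitative form of Lemma~\ref{Blow up of the integral} do the work, in particular covering $x_0\in\partial\Omega$; and second, the trigonometric bookkeeping that pins the exceptional set down to $2\phi_{C^*}+\alpha_{C_0}\in\{0,\pi,2\pi,3\pi,4\pi,5\pi\}$ and checks that off this set no nonzero contrast annihilates all of the corner functionals $L^{(k)}$. A minor preliminary point is to state precisely the combinatorial fact that the one-exposed-corner property yields such an $x_0$ for an arbitrary, possibly disconnected, support $\Sigma_{H_*}$.
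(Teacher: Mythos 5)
Your proposal follows essentially the same route as the paper's proof: argue by contradiction using finite dimensionality, derive the integral identity, use the extension argument and the irrationality condition to expose a corner of $\mathrm{supp}H$ in the interior of a background cell where $\gamma_0$ is constant, approximate the fundamental solution via Runge approximation with the pole approaching that corner, invoke the blow-up Lemma \ref{Blow up of the integral}, and conclude from the determinant $\sqrt{p^2+q^2}\,\mathrm{sin}(2\phi+\alpha)$ being nonzero under \eqref{assumption_for_phi} that $H\bigl|_{C_H}=0$, a contradiction. The only cosmetic difference is that the paper extracts the two independent linear conditions on the contrast not by testing with derivatives of the fundamental solution but by letting the single pole approach first the bottom edge (giving the trace condition) and then the vertex (giving $kh_{11}+h_{12}=0$), which is precisely what Lemma \ref{Blow up of the integral} encodes.
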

%%%%%%%%%%%%%%%%%%%%%%%%%%%%%%%%%%%%%%%%%%%%%%%%%%%%%%%%%%%%%%%%%%%%%%%%%%%%%%%%%%%%%%%%%%%%%%%%%%%%%%%
\begin{proof}
Suppose \eqref{injectivity} does not hold. 
Since $V_{\mathcal{P}_{r,\theta}, \{\phi_{c}\}}^{c}$ is a finite dimensional linear space, there exists $H \in V_{\mathcal{P}_{r,\theta}, \{\phi_{c}\}}^{c}$ with $\left\| H \right\|_{\infty}=1$ such that  
\[
F^{\prime}(\gamma_{0})[H]f=u^{\prime}\bigl|_{\Sigma}= 0,\,\,f \in \dot{H}^{- 1/2}_{\diamond}(\overline{\Sigma})
\]
where $u^{\prime}$ is solution of (\ref{def_of_Fre}) with $\gamma=\gamma_{0}$. This implies that
\begin{equation}
\begin{split}
& - \int_{\Omega} H \nabla u \cdot \nabla u dx = \int_{\Omega} \gamma_{0} \nabla u^{\prime} \cdot \nabla u dx =
\int_{\Omega} \gamma_{0} \nabla u  \cdot  \nabla u^{\prime}  dx
\\
& = \int_{\partial \Omega} \nu \cdot (\gamma_{0} \nabla u ) u^{\prime} ds - \int_{\Omega} \nabla \cdot (\gamma_{0} \nabla u ) u^{\prime} dx = 0, \label{integral identity}
\end{split}
\end{equation}
where $u \in H^{1}_{\diamond}(\Omega)$ is the unique solution for (\ref{EIT BVP}) with $\gamma=\gamma_{0}$. 
\par
Since $H \neq 0$ and $H$ is a constant $2 \times 2$ matrix for each parallelogram cell $C \in \mathcal{P}_{r,\theta}$,
$\mathrm{supp}H$ consists of several parallelogram cells in $\Omega$ and its boundary $\partial\, \mathrm{supp}H$ consists of edges and vertices of parallelogram cells.
By extending the domain $\Omega$ (see Remark \ref{whole_domain2} and Figure \ref{extend_domain}), we can assume that 
\begin{equation}
\mathrm{supp}H \cap \Sigma = \emptyset. \label{extension}
\end{equation}
Then, there exists an exposed corner point $x_{0}$ at the lower left of some cell $C_{H} \in \mathcal{P}_{r,\theta}$ (see Figure \ref{exposed_corner}).
%%%%%%%%%%%%%%%%%%%%%%%%%%%%%%%%%%%%%%%%
\begin{figure}[h]
\hspace{0.0cm}
\center
\includegraphics[keepaspectratio, scale=0.6]{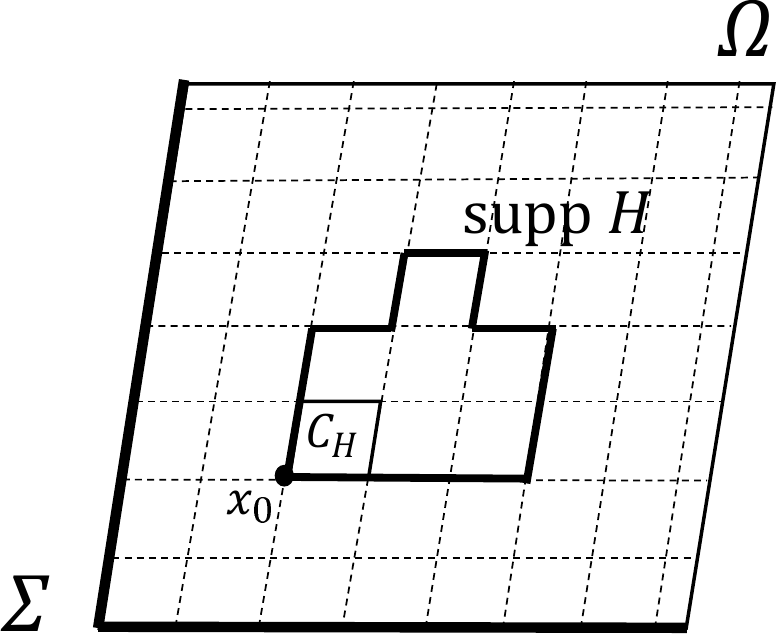}
\caption{Exposed corner.}
\label{exposed_corner}
\end{figure}
%%%%%%%%%%%%%%%%%%%%%%%%%%%%%%%%%%%%%%%%
Let us write $H$ in the form (see definition of the function space (\ref{function space H}))
\begin{equation}
H = R_{\phi_{C_{H}}}^{T} \left(
\begin{array}{cc}
h_{1}& 0 \\
0 & h_{2} \\
\end{array}
\right)R_{\phi_{C_{H}}}\neq 0 \ \ \mathrm{on} \ \ C_{H}. \label{assumption}
\end{equation}
Then, one of the constants $h_{i}$ is not zero.

Note that $\gamma_0$ is a constant $2 \times 2$ matrix near the corner $x_{0}$ because all corners of the cells in $\mathcal{P}_{r_{0}, \theta}$ except those on $\partial \Omega$ differ from those of cells in $\mathcal{P}_{r, \theta}$. In fact, otherwise, there exist $N,M \in \mathbb{N}$ such that $rN=r_{0}M$ which contradicts to $\frac{r}{r_{0}} \notin \mathbb{Q}$.
We denote by $C_{\gamma}$ such a cell for $\gamma_0$ including $x_0$ in its interior.
\par
%%%%%%%%%%%%%%%%%%%%%%%%%%%%%%%%%%%%%%%%%%%%%%%%%%%%%%%%%%%%%%%%%%%%%%%%
%%%%%%%%%%%%%%%%%%%%%%%%%%%%%%%%%%%%%%%%%%%%%%%%%%%%%%%%%%%%%%%%%%%%%%%%%
%%%%%%%%%%%%%%%%%%%%%%%%%%%%%%%%%%%%%%%%%%%%%%%%%%%%%%%%%%%%%%%%%%%%%%%%
Now we consider the transformations given in Figure \ref{transform}.
%%%%%%%%%%%%%%%%%%%%%%%%%%%%%%%
\begin{figure}[h]
\hspace{0.0cm}
\center
\includegraphics[keepaspectratio, scale=0.6]{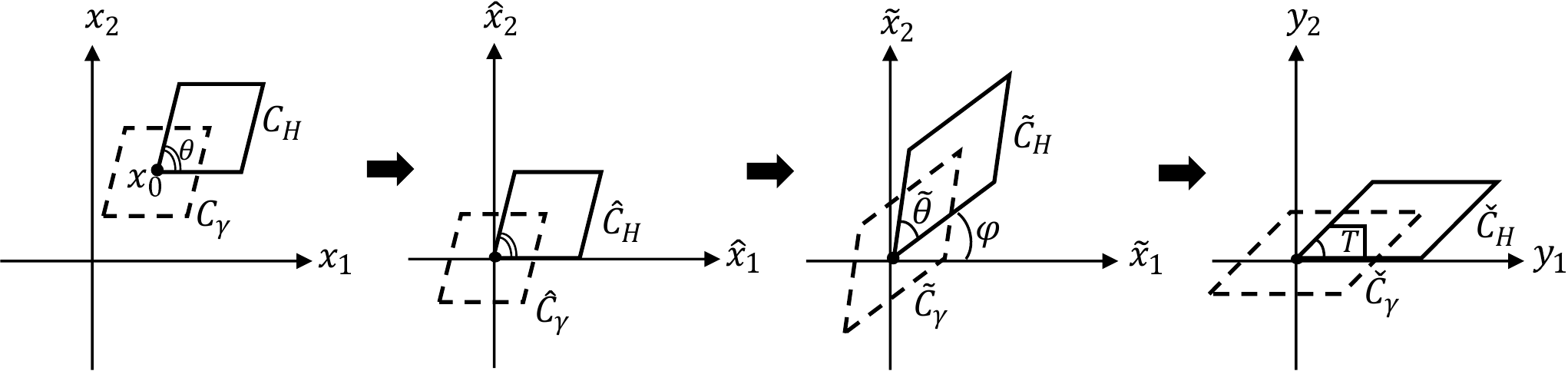}
\caption{Transformations.}
\label{transform}
\end{figure}
%%%%%%%%%%%%%%%%%%%%%%%%%%%%%%%%%%%%%%%%
By the change of variables $x=\hat{x}+x_0$, we have from (\ref{integral identity})
\begin{equation}
\int_{\hat{\Omega}} \hat{H} \nabla \hat{u} \cdot \nabla \hat{u}\, d\hat{x} = 0, \label{integral identity2}
\end{equation}
where $\hat{u}(\hat x):=u(x-x_0) \in H^{1}_{\diamond}(\hat{\Omega})$ is the unique solution of
\[
\nabla \cdot \hat \gamma_0 \nabla \hat u = 0\,\,\text{in}\,\,\hat \Omega, \ \ \partial_{\hat \gamma_{0}} \hat u=\hat f\,\,\text{on}\,\,\partial \hat\Omega, \,\,\hat{f} \in \dot{H}^{- 1/2}_{\diamond}(\overline{\hat{\Sigma}}),
\]
where we have denoted 
\[
\hat{\Omega}:=\Omega-x_0, \ \  \hat{\Sigma}:=\Sigma-x_0, \ \
\hat{C}_{H}:=C_{H}-x_0, \ \ 
\hat{C}_{\gamma}:=C_{\gamma}-x_0, 
\]
\[
\hat{H}:=H(\cdot + x_0), \ \
\hat{\gamma}_{0}:=\gamma_{0}(\cdot + x_0).
\]
Since $\hat{\gamma}_{0}(0)$ is a positive definite matrix, there exists a rotation matrix $R_{\psi}$ with an angle $\psi \in (0, 2 \pi]$ and a diagonal matrix $D=\mathrm{diag}(d_1, d_2)$ with $d_1, d_2 >0$ such that 
\[
\hat{\gamma}_{0}(0)=R^{T}_{\psi}DR_{\psi}.
\]
Then, by the change of variables given as $\hat{x}=(D^{1/2}R_{\psi})^{T}\tilde{x}$, we have from (\ref{integral identity2})
\begin{equation}
\int_{\tilde{\Omega}} \tilde{H} \nabla \tilde{u} \cdot \nabla \tilde{u} d\tilde{x} = 0, \label{integral identity3}
\end{equation}
where $\tilde{u}(\tilde x):=\hat u(D^{-1/2}R_{\psi}\hat x) \in H^{1}_{\diamond}(\tilde{\Omega})$ is the unique solution for 
\[
\nabla \cdot \tilde \gamma_0 \nabla \tilde u = 0\,\,\text{in}\,\,\tilde \Omega, \ \ \partial_{\tilde \gamma_{0}} \tilde u=\tilde f\,\,\text{on}\,\,\partial \tilde\Omega, \,\,\tilde{f} \in \dot{H}^{- 1/2}_{\diamond}(\overline{\tilde{\Sigma}}),
\]
and we have denoted
\[
\tilde{\Omega}:=D^{-1/2}R_{\psi}(\hat{\Omega}), \ \ 
\tilde{\Sigma}:=D^{-1/2}R_{\psi}(\hat{\Sigma}), \ \
\tilde{C}_{H}:=D^{-1/2}R_{\psi}\hat{C}_{H}, \ \ 
\tilde{C}_{\gamma}:=D^{-1/2}R_{\psi}\hat{C}_{\gamma}, 
\]
\[
\tilde{H}:=D^{-1/2}R_{\psi}\hat{H}(D^{-1/2}R_{\psi})^{T}, \ \ 
\tilde{\gamma}:=D^{-1/2}R_{\psi}\hat{\gamma} (D^{-1/2}R_{\psi})^{T}.
\]
Here we remark that 
\[
\tilde{\gamma}_{0}(0)=D^{-1/2}R_{\psi}R^{T}_{\psi}DR_{\psi}R^{T}_{\psi}D^{-1/2}=I.
\]

To proceed further, let $\varphi \in (0, 2\pi]$ be the angle between the axis $\tilde{x}_1$ and \linebreak $D^{-1/2}R_{\psi}\left(
\begin{array}{c}
1 \\
0
\end{array}
\right)$, and let $\tilde{\theta} \in(0, \pi]$ be the angle between $D^{-1/2}R_{\psi}\left(
\begin{array}{c}
1 \\
0
\end{array}
\right)$ and 
\linebreak
$D^{-1/2}R_{\psi}\left(
\begin{array}{c}
k \\
1
\end{array}
\right)$ with $k:= \frac{1}{\mathrm{tan} \theta}$.
\newline
By denoting $d:=\left\|D^{-1/2}R_{\psi}\left(
\begin{array}{c}
1 \\
0
\end{array}
\right) \right\|$, we have
\begin{equation}
\left(
\begin{array}{c}
\mathrm{cos}\varphi \\
\mathrm{sin}\varphi
\end{array}
\right)
=
d^{-1}
\left(
\begin{array}{c}
d_{1}^{-1/2}\mathrm{cos}\psi \\
d_{2}^{-1/2}\mathrm{sin}\psi
\end{array}
\right). \label{angle_varphi}
\end{equation}
Furthermore, by the change of variables given as $\tilde{x}=R_{\varphi}y$, we have from (\ref{integral identity3})
\begin{equation}
\int_{\check{\Omega}} \check{H} \nabla \check{u} \cdot \nabla \check{u} dy = 0, \label{integral identity4}
\end{equation}
where $\check{u}(\check x):=\tilde u(R_{\varphi}^{T}\tilde x) \in H^{1}_{\diamond}(\check{\Omega})$ is the unique solution for 
\begin{equation}
\nabla \cdot \check \gamma_0 \nabla \check u = 0\,\,\text{in}\,\,\check \Omega, \ \ \partial_{\check \gamma_{0}} \check u=\check f\,\,\text{on}\,\,\partial \check\Omega, \,\,\check{f} \in \dot{H}^{- 1/2}_{\diamond}(\overline{\check{\Sigma}}), \label{reduced equation}
\end{equation}
and we have denoted
\[
\check{\Omega}:=R_{\varphi}^{T}(\tilde{\Omega}), \ \
\check{\Sigma}:=R_{\varphi}^{T}(\tilde{\Sigma}), \ \
\check{C}_{H}:=R_{\varphi}^{T}\tilde{C}_{H}, \ \ 
\check{C}_{\gamma}:=R_{\varphi}^{T}\tilde{C}_{\gamma}, 
\]
\[
\check{H}:=R_{\varphi}^{T}\tilde{H}R_{\varphi}, \ \ 
\check{\gamma}:=R_{\varphi}^{T}\tilde{\gamma} R_{\varphi}.
\]
Here we remark that 
\begin{equation}
\check{\gamma}_{0}(0)=R_{\varphi}^{T}IR_{\varphi}=I \label{freezing} 
\end{equation}
and the angle of the corner at the origin is $\tilde \theta$ (see Figure \ref{transform}).
\par
We apply the Runge approximation theorem given in Lemma 4.1 of \cite{carstea2018uniqueness} by taking $\Omega_1,\,\Gamma_1$, and $\Omega_2$ there as follows. Namely, $\Omega_1=\check{\Omega}$, $\Gamma_1 = \check \Sigma$, and $\Omega_2 \subset \Omega_1$ is chosen such that $\mathrm{supp} \check{H} \subset \Omega_2$, $\eta \notin \Omega_2$, $\partial \Omega_1 \setminus \partial \Omega_2 = \Gamma_1$, $\partial \Omega_1 \cap \partial \Omega_2 = \partial \Omega_1 \setminus \Gamma_1$ (see Figure \ref{Runge_approx}). Here $\eta\sim0$ is the singular point of a fundamental solution $E_{\eta}^{\check{\gamma}_{0}}$ for the partial differential operator $\nabla\cdot(\check{\gamma}_{0}\nabla)$ in $\mathbb{R}^{2}$.
%%%%%%%%%%%%%%%%%%%%%%%%%%%%%%%%%%%%%%%%
\begin{figure}[h]
\hspace{0.0cm}
\center
\includegraphics[keepaspectratio, scale=0.6]{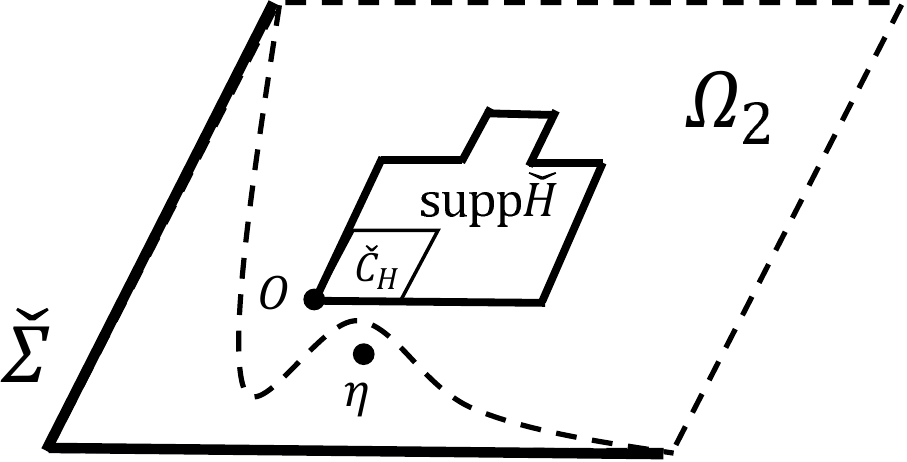}
\caption{Runge approximation.}
\label{Runge_approx}
\end{figure}
%%%%%%%%%%%%%%%%%%%%%%%%%%%%%%%%%%%%%%%%
Then, there exists $\{\check{f}_{n}\}_{n=1}^{\infty} \subset \dot{H}^{-1/2}_{\diamond}(\overline{\check{\Sigma}})$ such that the sequence of solutions $\check{u}_{n},\,n\in{\mathbb N}$ of \eqref{reduced equation} with $\check f=\check f_n,\,n\in{\mathbb N}$ approximates $E_{\eta}^{\check{\gamma}_{0}}$ as $n \to \infty$.
By letting $n \to \infty$ in (\ref{integral identity4}) with $\check{u}=\check{u}_{n}$, we have 
\[
0 = \int_{\mathrm{supp}\check{H}}\check{H}\nabla E_{\eta}^{\check{\gamma}_{0}} \cdot \nabla E_{\eta}^{{\check\gamma}_{0}} dy.
\]
Note that $\mathrm{supp}\check{H}$ has a small closed trapezoid $T$ with the angle $\tilde \theta$ at the origin such that $T \subset \check{C}_{H} \cap \check{C}_{\gamma}$ (see Figure \ref{transform}, right), and $\check \gamma$ is the identity matrix in the neighborhood $V \Subset \check{C}_{\gamma}$ of $T$ (see (\ref{freezing})).
Then, 
\[
E_{\eta}^{\check{\gamma}_{0}} - E_{\eta}^{0} \in H^{1}(V), \ \ \eta \in V,
\]
where $E_{\eta}^{0}$ is a fundamental solution for the partial differential operator freezing the coefficient at the origin $\nabla\cdot(\check{\gamma}_{0}(0)\nabla)$, which is the fundamental solution for $\Delta$, that is
\[
E_{\eta}^{0}(y) = \frac{1}{2\pi} \log|y-\eta|.
\]
Then, we have
\begin{equation}
\begin{split}
& 0 = 
\int_{T}\check{H}\nabla E_{\eta}^{\check{\gamma}_{0}} \cdot \nabla E_{\eta}^{\check{\gamma}_{0}} dy+\underbrace{\int_{\mathrm{supp}\check{H}\setminus T}\check{H}\nabla E_{\eta}^{\check{\gamma}_{0}} \cdot \nabla E_{\eta}^{\check{\gamma}_{0}} dy}_{=O(1) \ as \ \eta \to 0}
\\
& =\int_{T}\check{H}\nabla (E_{\eta}^{0}+E_{\eta}^{\check{\gamma}_{0}} - E_{\eta}^{0}) \cdot \nabla (E_{\eta}^{0}+E_{\eta}^{\check{\gamma}_{0}} - E_{\eta}^{0}) dy + O(1)
\\
& = \frac{1}{(2 \pi)^2}\underbrace{\int_{T}\check{H}\nabla_{y} \log|y-\eta| \cdot \nabla_{y}\log|y-\eta| dy}_{=:I(\eta)} + O(1). \label{key integral}
\end{split}
\end{equation}
for all $\eta \in V \setminus T$, which implies that
\[
\sup_{\eta \in V \setminus T}|I(\eta)| < \infty.
\]
Using Lemma \ref{Blow up of the integral} given later in Section 4, we have
\begin{equation}
\left \{
\begin{array}{l}
\check{h}_{11}+\check{h}_{22}=0\\
\tilde k\check{h}_{11} + \check{h}_{12}=0,
\end{array}
\right. \label{condition1}
\end{equation}
where 
\[
\check{H}\bigl|_{\check C_H} = \left(
\begin{array}{cc}
\check{h}_{11}& \check{h}_{12} \\
\check{h}_{12}& \check{h}_{22} \\
\end{array}
\right),\,\,\tilde k := \frac{1}{\mathrm{tan} \tilde \theta}.
\]
By the transformation we have introduced so far, the matrix $\check{H}\bigl|_{\check C_H}$ has the form 
\[
\check{H}\bigl|_{\check C_H}=R_{\varphi}^{T}D^{-1/2}R_{\psi}\hat{H}\bigl|_{\hat C_H}(R_{\varphi}^{T}D^{-1/2}R_{\psi})^{T}.
\]
By direct computation and using (\ref{angle_varphi}), we have  
\[
R_{\varphi}^{T}D^{-1/2}R_{\psi} = 
d
\left(
\begin{array}{cc}
1 & a \\
0 & b \\
\end{array}
\right),
\]
where
\begin{equation}
a:=\frac{d_2-d_1}{2d^{2}}\mathrm{sin}2\psi, \ \ \ b:=\frac{d^{-1/2}_{1}d^{-1/2}_{2}}{d^{2}}. \label{a_b}
\end{equation}
Using (\ref{assumption}), we have
\[
\hat{H}\bigl|_{\hat C_H} = R_{\phi_{C_{H}}}^{T} \left(
\begin{array}{cc}
h_{1}& 0 \\
0 & h_{2} \\
\end{array}
\right)R_{\phi_{C_{H}}}.
\]
By direct computation, the condition (\ref{condition1}) for $\check{h}_{ij}$ is translated into the condition for $h_i$ as
\begin{equation}
\left \{
\begin{array}{l}
\left\{\mathrm{cos}^{2}\phi - a \mathrm{sin}2\phi+(a^2+b^2)\mathrm{sin}^{2}\phi \right\}h_{1}
\\ \hspace{2cm}
+\left\{\mathrm{sin}^{2}\phi + a \mathrm{sin}2\phi+(a^2+b^2)\mathrm{cos}^{2}\phi \right\}h_{2}=0, 
\vspace{2mm}
\\
\left\{-\frac{1}{2}\mathrm{sin}2\phi + (a- \tilde kb) \mathrm{sin}^{2}\phi\right\}h_{1}+\left\{\frac{1}{2}\mathrm{sin}2\phi + (a-\tilde kb) \mathrm{cos}^{2}\phi \right\}h_{2}=0.
\end{array}
\right. \label{condition2}
\end{equation}
By direct computation, the determinant of equation (\ref{condition2}) is given by
\begin{equation}
\mathrm{(Determinant)}=p\,\mathrm{sin}2\phi+q\,\mathrm{cos}2\phi=\sqrt{p^2+q^2}\mathrm{sin}(2\phi+\alpha), \label{determinant}
\end{equation}
where
\begin{equation}
p:=-\frac{1}{2}a^2+\frac{1}{2}b^2+\tilde kab + \frac{1}{2}, \ \ \ q:=a-\tilde kb, \label{p_q} 
\end{equation}
and $\alpha \in (-\pi, \pi]$ depending on $\gamma_{0}\bigl|_{C_{\gamma_0}}$ is the angle corresponding to the composition of trigonometric functions.
By assumption (\ref{assumption_for_phi}), (Determinant) is not zero. 
Therefore, we conclude that 
\[
h_1=h_2=0,
\]
which contradicts to (\ref{assumption}).
Hence, Proposition \ref{Local_injectivity} has been proved.
\end{proof}
%%%%%%%%%%%%%%%%%%%%%%%%%%%%%%%%%%%%%%%%%%%%%%%%%%%%%%%%%%%%%%%%%%%%%%%%%%%%%%%%%%%%%%%%%%%%%%%%%%%%%%%%%%%%%%%%%%%%%%%%%%%%%%%%%%%%%%%%%%%%%%%%%%%%%%%%%%%%%%%%%%%%%%%%%%%%%%%%%%%%%%%%%%%%%%%%%%%%%%%%%%%%%%%%%%%%%%%%%%%%
\begin{remark}
In terms of the original coordinates, the trace zero condition for $\check{H}\bigl|_{\check C_H}$ can be computed as follows:
\[
\begin{split}
& 0 = \mathrm{Tr} [\check{H}\bigl|_{\check C_H}] 
= \mathrm{Tr}[R_{\varphi}^{T}D^{-1/2}R_{\psi}\hat{H}\bigl|_{\hat C_H}(R_{\varphi}^{T}D^{-1/2}R_{\psi})^{T}]
\\
& = \mathrm{Tr}[\hat{H}\bigl|_{\hat C_H}R_{\varphi}^{T}D^{-1}R_{\psi}] 
= \mathrm{Tr}[\hat{H}\bigl|_{\hat C_H} \hat\gamma_{0}(0)^{-1}]
= \mathrm{Tr}[H\bigl|_{C_H} \gamma_{0}(x_0)^{-1}].
\end{split}
\]
We note that this condition can be derived by approaching $\eta$ to no matter which edge
of the cell $\check{C}_H$ excluding $0$ in Figure \ref{Runge_approx}. 
\end{remark}
%%%%%%%%%%%%%%%%%%%%%%%%%%%%%%%%%%%%%%%%%%%%%%%%%%%%%%%%%%%%%%%%%%%%%%%%%%%%%%%%%%%%%%%%%%%%%%%%%%%%%%%%%%%%%%%%%%%%%%%%%%%%%%%%%%%%%%%%%%%%%%%%%%%%%%%%%%%%%%%%%%%%%%%%%%%%%%%%%%%%%%%%%%%%%%%%%%%%%%%%%%%%%%%%%%%%%%%%%%%%
\begin{remark}(extention argument)\label{whole_domain2}
In the proof, we have assumed (\ref{extension}) which can be justified as follows by just using the following condition:
\[
\int_{\Omega}H \nabla u  \cdot \nabla u dx = 0,
\]
where $H\in L^\infty(\Omega)$ and $u \in H^{1}_{\diamond}(\Omega)$ is the unique solution of
\begin{equation}
\nabla \cdot \gamma_0 \nabla  u = 0\,\,\text{in}\,\,\Omega, \ \ \partial_{ \gamma_{0}}  u= f\,\,\text{on}\,\,\partial \Omega, \,\,{f} \in \dot{H}^{- 1/2}_{\diamond}(\overline{\Sigma}). \label{rem_sol_Omega}
\end{equation}
In fact, let $\Omega_{E} \subset \mathbb{R}^{2}$ be the parallelogram extension of $\Omega$ with $\Omega \Subset \Omega_{E}$ (see Figure \ref{extend_domain}), and let $\gamma_{0,E}$ be the extension of $\gamma_{0}$ to $\Omega_{E}$ with $\gamma_{0,E}|_{\Omega}=\gamma_{0}$, and let $H_E$ be the zero extension of $H$ to $\Omega_{E}$. 
%%%%%%%%%%%%%%%%%%%%%%%%%%%%%%%%%%%%%%%%
\begin{figure}[h]
\hspace{0.0cm}
\center
\includegraphics[keepaspectratio, scale=0.5]{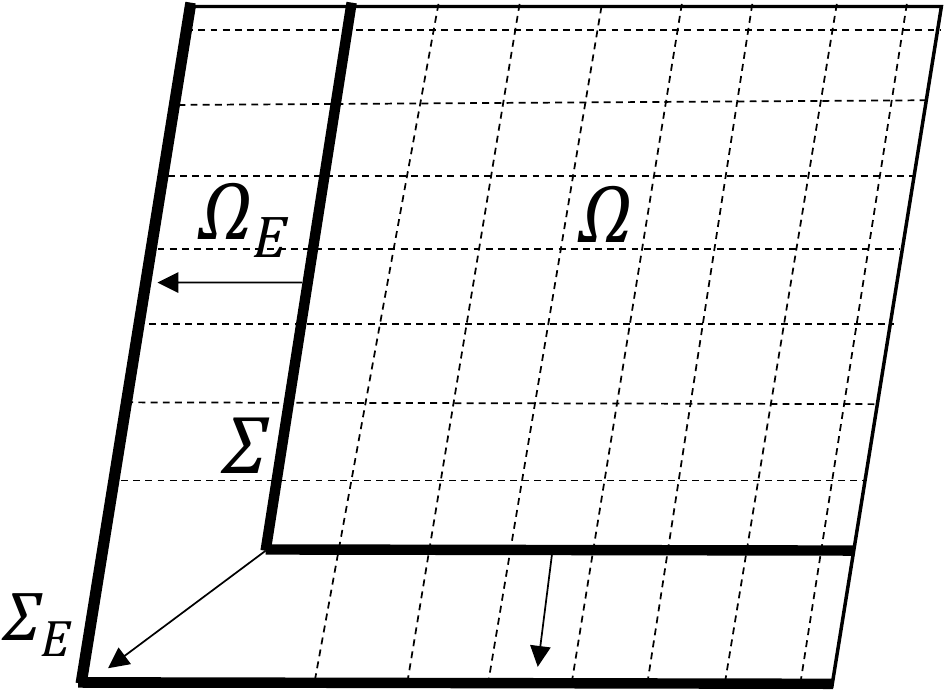}
\caption{Extension of domain.}
\label{extend_domain}
\end{figure}
%%%%%%%%%%%%%%%%%%%%%%%%%%%%%%%%%%%%%%%%
Also, let $w=w^g \in H^{1}_{\diamond}(\Omega_E)$ be the solution of 
\begin{equation}
\nabla \cdot \gamma_{0,E} \nabla w = 0\,\,\text{in}\,\,\Omega_E, \ \ \partial_{\gamma_{0,E}} w= g\,\,\text{on}\,\,\partial\Omega_E.\label{rem_sol_tilde_Omega}
\end{equation}
for any given $ g \in \dot{H}^{- 1/2}_{\diamond}(\overline{\Sigma_{E}})$.
Then, the restriction $w|_{\Omega}$ on $\Omega$ satisfies (\ref{rem_sol_Omega}) with $f=\partial_{\gamma_{0,E}} w\big|_{\partial\Sigma} \in \dot{H}^{- 1/2}_{\diamond}(\overline{\Sigma})$, which yields
\[
\int_{\Omega_{E}}{H_E} \nabla w  \cdot \nabla w dx=\int_{\Omega}{H} \nabla (w|_{\Omega})  \cdot \nabla (w|_{\Omega}) dx =0,
\]
for any $w=w^g,\,g\in\dot{H}^{- 1/2}_{\diamond}(\overline{\Sigma_{E}})$. Then, the proof of \eqref{extension} ends by noticing that we have $\mathrm{supp}H_E \cap \Sigma_E = \emptyset$. \end{remark}
%%%%%%%%%%%%%%%%%%%%%%%%%%%%%%%%%%%%%%%%%%%%%%%%%%%%%%%%%%%%%%%%%%%%%%%%%%%%%%%%%%%%%%%%%%%%%%%%%%%%%%%%%%%%%%%%%%%%%%%%%%%%%%%%%%%%%%%%%%%%%%%%%%%%%%%%%%%%%%%%%%%%%%%%%%%%%%%%%%%%%%%%%%%%%%%%%%%%%%%%%%%%%%%%%%%%%%%%%%%%%%%%
\medskip
\vspace{5pt}
Next, we turn to consider the orthotropic case of Proposition \ref{Local_injectivity} (that is, $\psi=\phi=2\pi$, $\tilde k=\frac{1}{\mathrm{tan}\theta}$, $a=0$, and $b\neq 0$).
Then, the determinant of equation (\ref{condition2}) is determined by
\[
\mathrm{(Determinant)}=-b\,\tilde k.
\]
Hence, the determinant is not zero except $\theta=\frac{\pi}{2}$, which implies the following lemma and collorary.
%%%%%%%%%%%%%%%%%%%%%%%%%%%%%%%%%%%%%%%%%%%%%%%%%%%%%%%%%%%%%%%%%%%%%%%%%%%%%%%%%%%%%%%%%%%%%%%%%%%%%%%%%%%%%%%%%%%%%%%%%%%%%%%%%%%%%%%%%%%%%%%%%%%%%%%%%%%%%%%%%%%%%%%%%%%%%%%%%%%%%%%%%%%%%%%%%%%%%%%%%%%%%%%%%%%%%%%%%%%%%%%%
\begin{lemma}\label{Local_injectivity_oth}
For $r \in (0,1]$ and $\theta \in (0, \pi)$, define
\[
V_{\mathcal{P}_{r,\theta}}^{c,ort}:=\left\{ H = \left(
\begin{array}{cc}
h_{1}& 0 \\
0 & h_{2} \\
\end{array}
\right): \Omega \to \mathbb{R}^{2 \times 2} \,:
\mbox{$H$ is  $\mathrm{constant}$ for each $C\in \mathcal{P}_{r,\theta}$}\right\},
\]
and $V_{\mathcal{P}_{r,\theta}}^{c,ort,+}:= L^{\infty}_{+}(\Omega) \cap V_{\mathcal{P}_{r,\theta}}^{c,oth}$.
Let $\theta\in(0,\pi) \setminus \{ \frac{\pi}{2}\}$, $r_{0} \in (0,1]$, and $\gamma_0 \in \mbox{\rm int}(V_{\mathcal{P}_{r_0,\theta}}^{c,ort,+})$.
Then, assuming that $r,r_{0} \in (0,1]$ satisfy the irrational condition, that is, $\frac{r}{r_0}\not\in{\mathbb Q}$, we have 
\begin{equation}
\min\{ \left\|F^{\prime}(\gamma_{0})[H]\right\| \,: H \in V_{\mathcal{P}_{r,\theta}}^{c, oth}, \ \left\| H \right\|_{\infty}=1 \}>0.
\end{equation}
%%%%%%%%%%%%%%%%%%%%%%%%%%%%%%%%%%%%%%%%%%%%%%%%%%%%%%%%%%%%%%%%%%%%%%%%%%%%%%%%%%%%%%%%%%%%%%%%%%%%%%%%%%%%%%%%%%%%%%%%%%%%%%%%%%%%%%%%%%%%%%%%%%%%%%%%%%%%%%%%%%%%%%%%%%%%%%%%%%%%%%%%%%%%%%%%%%%%%%%%%%%%%%%%%%%%%%%%%%%%%%%%
\end{lemma}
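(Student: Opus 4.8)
The plan is to obtain Lemma~\ref{Local_injectivity_oth} as a direct specialization of Proposition~\ref{Local_injectivity}, since the orthotropic case is exactly the situation $\psi = 2\pi$, $\phi = 2\pi$ (so that $\hat\gamma_0(0)$ is already diagonal and $\hat H|_{\hat C_H}$ is already diagonal), and $a = 0$, $b \neq 0$ in the notation of \eqref{a_b}. First I would note that the whole argument of Proposition~\ref{Local_injectivity} goes through verbatim up to and including the derivation of the determinant condition \eqref{determinant}: one assumes for contradiction that the minimum vanishes, picks $H$ with $\|H\|_\infty = 1$ in the (finite-dimensional) space $V_{\mathcal{P}_{r,\theta}}^{c,ort}$ satisfying $F'(\gamma_0)[H] = 0$, uses the integral identity \eqref{integral identity}, the extension argument (Remark~\ref{whole_domain2}) to arrange $\mathrm{supp}\,H \cap \Sigma = \emptyset$, locates an exposed lower-left corner $x_0$ of a cell $C_H$, observes via the irrationality condition $\frac{r}{r_0} \notin \mathbb{Q}$ that $\gamma_0$ is constant near $x_0$, performs the affine changes of variables reducing $\gamma_0$ to the identity at the corner, applies the Runge approximation (Lemma~4.1 of \cite{carstea2018uniqueness}) to insert the fundamental solution, and then invokes Lemma~\ref{Blow up of the integral} to get the two linear relations \eqref{condition1}, equivalently \eqref{condition2}.

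Second, I would simply substitute the orthotropic values into \eqref{p_q} and \eqref{determinant}. With $a = 0$ we get $p = \tfrac12 b^2 + \tfrac12$ and $q = -\tilde k b$; but in fact, in the fully orthotropic reduction there is no residual shear, $\psi = \phi = 2\pi$, so $\mathrm{sin}2\phi = 0$ and $\mathrm{cos}2\phi = 1$, whence $(\text{Determinant}) = q = -\tilde k b$ (this is precisely what the paragraph preceding the lemma records). Since $b \neq 0$ by positive-definiteness of $\gamma_0(x_0)$, this determinant vanishes if and only if $\tilde k = 0$, i.e.\ $\tilde\theta = \tfrac\pi2$. Here $\tilde\theta$ is the angle of the transformed corner; because in the orthotropic setting the reducing transformation is a diagonal scaling composed with an axis-aligned rotation, $\tilde\theta = \tfrac\pi2$ forces $\theta = \tfrac\pi2$ as well (the transformations preserve the rectangular structure). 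Thus under the standing hypothesis $\theta \in (0,\pi) \setminus \{\tfrac\pi2\}$ the determinant is nonzero, so \eqref{condition2} forces $h_1 = h_2 = 0$, contradicting $\|H\|_\infty = 1$, and the minimum is positive.

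The only genuine subtlety I foresee — and the step I would treat with care — is verifying that in the orthotropic branch the chain of transformations really does keep everything axis-aligned, so that the relation $\tilde k = 0 \iff \theta = \tfrac\pi2$ holds and, more importantly, that the exceptional values $2\phi_i + \alpha_c \in \{0,\pi,\dots,5\pi\}$ of \eqref{assumption_for_phi} are automatically avoided without imposing any extra condition on $\phi$: since $\phi$ is fixed at $2\pi$ and the determinant reduces to the single monomial $-\tilde k b$, the vanishing locus is just $\theta = \tfrac\pi2$, which we have excluded, so no probabilistic/irrationality hypothesis beyond $\frac{r}{r_0}\notin\mathbb{Q}$ is needed. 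I would also make explicit that $V_{\mathcal{P}_{r,\theta}}^{c,ort} \subset V_{\mathcal{P}_{r,\theta}, \{\phi_c\}}^{c}$ with all $\phi_c = 2\pi$, so the problem is a genuine restriction of the one already solved, and the computation of the determinant is literally the $a=0$, $\mathrm{sin}2\phi = 0$ case of \eqref{determinant}. Modulo these bookkeeping remarks, the proof is one paragraph: repeat the proof of Proposition~\ref{Local_injectivity} and read off $(\text{Determinant}) = -b\tilde k \neq 0$.
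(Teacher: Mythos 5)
Your proposal is correct and follows essentially the same route as the paper: the paper also obtains the lemma by specializing the proof of Proposition~\ref{Local_injectivity} to the orthotropic case ($\psi=\phi=2\pi$, $a=0$, $b\neq 0$), where the determinant of \eqref{condition2} collapses to $-b\,\tilde k$, which is nonzero precisely because $\theta\neq\frac{\pi}{2}$ (so $\tilde k\neq 0$), forcing $h_1=h_2=0$. Your extra care in checking that $\tilde\theta=\frac{\pi}{2}$ occurs only when $\theta=\frac{\pi}{2}$ (since the reduction is a pure diagonal scaling when $\gamma_0$ is diagonal) is a sound, slightly more explicit version of the paper's identification of $\tilde k$ with a nonzero multiple of $1/\tan\theta$.
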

%%%%%%%%%%%%%%%%%%%%%%%%%%%%%%%%%%%%%%%%%%%%%%%%%%%%%%%%%%%%%%%%%%%%%%%%%%%%%%%%%%%%%%%%%%%%%%%%%%%%%%%%%%%%%%%%%%%%%%%%%%%%%%%%%%%%%%%%%%%%%%%%%%%%%%%%%%%%%%%%%%%%%%%%%%%%%%%%%%%%%%%%%%%%%%%%%%%%%%%%%%%%%%%%%%%%%%%%%%%%%%%%
\begin{corollary}\label{Probabilistic LLS ort}
Fix $\theta\in(0,\pi) \setminus \{ \frac{\pi}{2}\}$, $r_0 \in (0, \pi]$, and $\gamma_0 \in \mbox{\rm int}(V_{\mathcal{P}_{r_0,\theta}}^{c,ort,+})$.
For $r \in (0,1]$, let $Q^{p, ort}_{r}$ be a proposition stated as follows: 
\begin{equation}
\left\{
\begin{array}{ll}
{}^\exists\epsilon>0:\\
\quad\,\, \,{}^\forall\gamma\in\mbox{\rm int}(V_{\mathcal{P}_{r,\theta}}^{c, ort, +})\,\,\mbox{\rm with}\,\,\Vert\gamma-\gamma_0\Vert_\infty\le\epsilon,\\
\quad\,\,\, {}^\exists\delta=\delta(\gamma)>0\,\,\mbox{\rm with}\,\,B_\delta(\gamma)\subset\mbox{\rm int}(V_{\mathcal{P}_{r,\theta}}^{c,ort,+})
\\
\qquad\,\,\,\,\mbox{\rm such that}\\
\qquad\qquad\Vert\tau-\sigma\Vert_\infty\le C
\Vert F(\tau)-F(\sigma)\Vert,\,\,\tau,\,\sigma\in B_\delta(\gamma)\\
\qquad\qquad\qquad\qquad\quad\,\,\,\mbox{\rm with a constant}\,\,C=C(\gamma)>0,
\end{array}
\right.
\end{equation}
Furthermore, let $\mathcal{E}^{p,ort}=\{ r \in (0,1] \,: Q_{r}^{p,ort} \ \text{\rm is true}\}$ be an event. 
Then,
\[
m(\mathcal{E}^{p,ort})=1,
\]
where $m$ is the two-dimensional Lebesgue measure.
\end{corollary}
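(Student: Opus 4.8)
The plan is to obtain Corollary \ref{Probabilistic LLS ort} as a direct combination of Lemma \ref{Local_injectivity_oth} with Lemma \ref{Deterministic Lipschitz Stability}, the probabilistic statement then reducing to the trivial observation that a countable set has Lebesgue measure zero. Fix $\theta\in(0,\pi)\setminus\{\tfrac{\pi}{2}\}$, $r_0\in(0,1]$ and $\gamma_0\in\mathrm{int}(V^{c,ort,+}_{\mathcal{P}_{r_0,\theta}})$ as in the statement, and set
\[
A:=\{\,r\in(0,1]\,:\,r/r_0\notin\mathbb{Q}\,\}.
\]
I claim that $A\subseteq\mathcal{E}^{p,ort}$, i.e. that $Q^{p,ort}_r$ is true for every $r\in A$, which already suffices.

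So fix $r\in A$. Then $r$ and $r_0$ satisfy the irrationality condition $r/r_0\notin\mathbb{Q}$, so Lemma \ref{Local_injectivity_oth} applies and gives $\min\{\|F'(\gamma_0)[H]\|:H\in V^{c,ort}_{\mathcal{P}_{r,\theta}},\ \|H\|_\infty=1\}>0$; that is, $F'(\gamma_0)$ is injective on the finite-dimensional space $V^{c,ort}_{\mathcal{P}_{r,\theta}}$. I then invoke Lemma \ref{Deterministic Lipschitz Stability} with the background space $V_b:=V^{c,ort}_{\mathcal{P}_{r_0,\theta}}$ (so indeed $\gamma_0\in V_b^{+}$) and the perturbation space $V_p:=V^{c,ort}_{\mathcal{P}_{r,\theta}}$; both are linear subspaces of $L^\infty(\Omega;\mathbb{R}^{2\times2})$, consisting of cell-wise constant diagonal matrices, so the hypotheses of that lemma are met and its assumption \eqref{inj_Fre_der} is exactly the positivity just obtained. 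Lemma \ref{Deterministic Lipschitz Stability} then furnishes $\epsilon=\epsilon(\gamma_0)>0$ such that for every $\gamma\in\mathrm{int}(V_p^{+})$ with $\|\gamma_0-\gamma\|_\infty\le\epsilon$ there are $\delta=\delta(\gamma_0,\gamma)>0$ with $B_\delta(\gamma)\subset\mathrm{int}(V_p^{+})$ and $C=C(\gamma_0,\gamma)>0$ such that $\|\tau-\sigma\|_\infty\le C\,\|F(\tau)-F(\sigma)\|$ for all $\tau,\sigma\in B_\delta(\gamma)$. Reading off the definitions, this is precisely the proposition $Q^{p,ort}_r$, so $r\in\mathcal{E}^{p,ort}$ and the claim $A\subseteq\mathcal{E}^{p,ort}$ follows.

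To finish, observe that $(0,1]\setminus A=\{r\in(0,1]:r/r_0\in\mathbb{Q}\}=(r_0\mathbb{Q})\cap(0,1]$ is countable, hence $m\bigl((0,1]\setminus A\bigr)=0$ and $m(A)=1$; since $A\subseteq\mathcal{E}^{p,ort}\subseteq(0,1]$ we conclude $m(\mathcal{E}^{p,ort})=1$. There is essentially no obstacle left to overcome at this stage: the genuine work is the blow-up analysis of the integral near the exposed corner already carried out in Proposition \ref{Local_injectivity} and specialised in Lemma \ref{Local_injectivity_oth} (where the relevant determinant collapses to $-b\,\tilde k$, nonzero precisely because $\theta\neq\tfrac{\pi}{2}$ forces $\tilde k\neq0$ and because $b>0$). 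The only points worth keeping in mind are that the $\epsilon$ in $Q^{p,ort}_r$ is permitted to depend on $r$, so no uniformity in $r$ is required, and that the excluded set of ratios is not merely null but countable.
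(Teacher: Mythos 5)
Your proposal is correct and follows exactly the route the paper intends: combine the orthotropic injectivity of $F'(\gamma_0)$ from Lemma \ref{Local_injectivity_oth} (valid whenever $r/r_0\notin\mathbb{Q}$) with Lemma \ref{Deterministic Lipschitz Stability} taking $V_b=V^{c,ort}_{\mathcal{P}_{r_0,\theta}}$, $V_p=V^{c,ort}_{\mathcal{P}_{r,\theta}}$, and then discard the countable (hence Lebesgue-null) set of $r$ with $r/r_0\in\mathbb{Q}$. Your observation that $\epsilon$ may depend on $r$ and that the exceptional set is countable is consistent with the paper's (implicit) argument, so there is nothing to add.
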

%%%%%%%%%%%%%%%%%%%%%%%%%%%%%%%%%%%%%%%%%%%%%%%%%%%%%%%%%%%%%%%%
\subsection{Trapezoidal decomposition}\label{Trapezoidal decomposition}
\begin{proposition} \label{Local_injectivity_trap}
Fix $r_{0}, q \in (0,1]$ satisfying the irrational condition, that is, $\frac{r_0}{q} \not \in{\mathbb Q}$, and $\gamma_0 \in \mbox{\rm int}(V_{\mathcal{L}_{r_0,\theta}}^{c,+})$.
Let $r,r_{0} \in (0,1]$ satisfy the irrational condition, that is, $\frac{r}{r_0}\not\in{\mathbb Q}$.
Assume that $\theta \in (0, \pi)$ satisfies
\begin{equation}
\pm \left<\left( \gamma_0|_{C}\right)^{-1} 
\left(
\begin{array}{c}
1 \\
0 \\
\end{array}
\right),
\left(
\begin{array}{c}
\frac{1}{\mathrm{tan}\theta} \\
1 \\
\end{array}
\right)
\right> \neq 0, \ \ \mbox{\rm for each cell $C \in \mathcal{L}_{r_0,\theta}$}.
\label{assumption_trape}
\end{equation}
Then, we have
\begin{equation}
\min\{\left\|F^{\prime}(\gamma_{0})[H]\right\| \,: H \in V_{\mathcal{T}_{r,\theta}}^{c}, \ \left\| H \right\|_{\infty}=1 \}>0.
\end{equation}
\end{proposition}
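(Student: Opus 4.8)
The plan is to mimic the structure of the proof of Proposition \ref{Local_injectivity} but to exploit the two lower corners of a trapezoidal cell of $\mathrm{supp}\,H$ instead of a single parallelogram corner. Suppose, for contradiction, that the minimum is zero. Since $V_{\mathcal{T}_{r,\theta}}^{c}$ is finite dimensional, there is $H \in V_{\mathcal{T}_{r,\theta}}^{c}$ with $\|H\|_\infty = 1$ and $F'(\gamma_0)[H]=0$, which yields the integral identity
\[
\int_{\Omega} H \nabla u \cdot \nabla u \, dx = 0
\]
for the solution $u$ of \eqref{EIT BVP} with $\gamma = \gamma_0$, exactly as in \eqref{integral identity}. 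Using the extension argument (Remark \ref{whole_domain2}, adapted to the trapezoidal geometry) we may assume $\mathrm{supp}\,H \cap \Sigma = \emptyset$. The irrationality conditions $\frac{r}{r_0}\notin\mathbb{Q}$ and $\frac{r_0}{q}\notin\mathbb{Q}$ guarantee that there is a trapezoidal cell $C_H \in \mathcal{T}_{r,\theta}$ of $\mathrm{supp}\,H$ whose two lower corner points $x_0$ and $x_1$ both lie in the interior of (possibly distinct) cells $C_\gamma^{(0)}, C_\gamma^{(1)} \in \mathcal{L}_{r_0,\theta}$, on each of which $\gamma_0$ is a constant positive-definite matrix. (This is where the trapezoid beats the parallelogram: a parallelogram has only one corner whose \emph{adjacent} cell in $\mathrm{supp}\,H$ is empty on one side in the needed way, whereas for a trapezoid both lower corners are exposed.)

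Next I would localize near each lower corner separately. Fix one lower corner, say $x_0$, with its surrounding constant value $\gamma_0|_{C_\gamma^{(0)}}$. By translating $x_0$ to the origin, applying the affine change of variables $\hat x \mapsto (D^{1/2}R_\psi)^T \tilde x$ that diagonalizes and normalizes $\gamma_0|_{C_\gamma^{(0)}}$ to the identity, and a further rotation $R_\varphi$, the integral identity becomes $\int \check H \nabla \check u \cdot \nabla \check u \, dy = 0$ with $\check\gamma_0(0) = I$, just as in \eqref{integral identity4}. Applying the Runge approximation of Lemma 4.1 of \cite{carstea2018uniqueness} with the singular point $\eta$ of the fundamental solution $E_\eta^{\check\gamma_0}$ placed near the origin, letting $n\to\infty$, and comparing $E_\eta^{\check\gamma_0}$ with $E_\eta^0 = \frac{1}{2\pi}\log|y-\eta|$ on a neighborhood of a small trapezoid $T \subset \check C_H \cap \check C_\gamma^{(0)}$ with the same angle $\tilde\theta$ at the origin, I obtain that the integral $I(\eta) = \int_T \check H \nabla_y \log|y-\eta|\cdot\nabla_y\log|y-\eta|\,dy$ stays bounded as $\eta\to 0$. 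By Lemma \ref{Blow up of the integral_trap} from Section 4, this boundedness forces the trace-zero condition $\check h_{11}+\check h_{22}=0$ together with the second relation $\tilde k \check h_{11} + \check h_{12}=0$ encoding the two edges meeting at that corner. Translating back through the affine map, as in the Remark after Proposition \ref{Local_injectivity}, the trace-zero condition reads $\mathrm{Tr}[H|_{C_H}\,\gamma_0(x_0)^{-1}]=0$, and the edge condition reads $\langle (\gamma_0|_{C_\gamma^{(0)}})^{-1}(1,0)^T,(\tfrac{1}{\tan\theta},1)^T\rangle$-type relation linking the entries of $H|_{C_H}$.

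The decisive new step is to run the same localization at the \emph{second} lower corner $x_1$, whose surrounding constant is $\gamma_0|_{C_\gamma^{(1)}}$ (and whose two incident edges of $C_H$ have slopes related to $\theta$ by the isosceles-trapezoid symmetry, i.e. the mirror of those at $x_0$). This produces a second pair of linear conditions on the three independent entries of the symmetric matrix $H|_{C_H}$. Combining the two trace-type and two edge-type conditions, I would show the resulting homogeneous linear system on $H|_{C_H}$ has only the trivial solution precisely when the non-degeneracy hypothesis \eqref{assumption_trape} holds — the pairing $\pm\langle(\gamma_0|_C)^{-1}(1,0)^T,(\tfrac{1}{\tan\theta},1)^T\rangle \neq 0$ is exactly the statement that the two edge directions at a lower corner, pulled back through $\gamma_0^{-1}$, are not orthogonal to the relevant axis, which is what guarantees the coefficient determinant does not vanish. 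Hence $H|_{C_H}=0$, contradicting $\|H\|_\infty=1$ at that cell, and since the same argument applies to every cell of $\mathrm{supp}\,H$ by choosing an exposed lower corner there, we conclude $H=0$, a contradiction. The main obstacle I anticipate is the bookkeeping needed to verify that both lower corners of \emph{some} cell of $\mathrm{supp}\,H$ are genuinely exposed and sit in cell interiors of $\mathcal{L}_{r_0,\theta}$ simultaneously — this is where both irrationality conditions $\frac{r}{r_0}\notin\mathbb{Q}$ and $\frac{r_0}{q}\notin\mathbb{Q}$ must be used, the second one precisely to rule out that the slide parameter $q$ creates an accidental alignment of a corner of $\mathrm{supp}\,H$ with an interface of the background decomposition.
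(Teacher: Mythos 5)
Your overall skeleton (argue by contradiction, use finite dimensionality, derive the integral identity, apply the extension argument of Remark \ref{whole_domain2}, Runge approximation, and blow-up of $I(\eta)$ at the two exposed lower corners of a cell $C_H\subset\mathrm{supp}\,H$) is the same as the paper's. The genuine gap is at the decisive step where the corner conditions must be combined to force $H|_{C_H}=0$: you defer it (``I would show the resulting homogeneous linear system has only the trivial solution precisely when \eqref{assumption_trape} holds''), and in the form you set it up it is not clear that it works. By allowing the two lower corners to lie in possibly distinct background cells $C_\gamma^{(0)},C_\gamma^{(1)}$, you are led to two separate affine normalizations, so the two pairs of conditions live in two different coordinate frames and involve two a priori unrelated constant matrices $\gamma_0|_{C_\gamma^{(0)}}$ and $\gamma_0|_{C_\gamma^{(1)}}$; the hypothesis \eqref{assumption_trape} does not in any obvious way control the rank of the resulting system of four equations in the three entries of $H|_{C_H}$, and you give no computation. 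The paper avoids this entirely by observing that $\mathcal{L}_{r_0,\theta}$ consists of horizontal (lateral) strips, so the two lower corners of $C_H$, being at the same height, lie in the interior of one and the same background cell $C_\gamma$ (this is where the irrationality conditions enter); hence a \emph{single} transformation normalizes $\gamma_0$ to the identity near both corners simultaneously, and the mirrored leg directions $(\pm\cot\theta,1)$ yield, via Lemma \ref{Blow up of the integral} applied at the origin and at the reflected second corner (this two-corner combination is exactly Lemma \ref{Blow up of the integral_trap}; note also that the two single-corner relations you quote come from Lemma \ref{Blow up of the integral}, not from Lemma \ref{Blow up of the integral_trap}), the relations $\check h_{11}+\check h_{22}=0$, $\tilde k\check h_{11}+\check h_{12}=0$, $\tilde k\check h_{11}-\check h_{12}=0$, whence $\check H|_{\check C_H}=0$ as soon as $\tilde k\neq0$; assumption \eqref{assumption_trape} is used precisely to guarantee that the transformed corner angle is not $\pi/2$.

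Two smaller points. Your closing claim that ``the same argument applies to every cell of $\mathrm{supp}\,H$'' is both unnecessary (the contradiction with $H|_{C_H}\neq0$ already ends the proof) and not literally true, since only suitably placed cells of $\mathrm{supp}\,H$ have two exposed lower corners. The natural repair of your argument is simply to add the same-strip observation above: once both corners are known to sit inside a single cell of $\mathcal{L}_{r_0,\theta}$, your two corner analyses collapse into one computation in a common frame, and the combination step becomes the explicit cancellation recorded in Lemma \ref{Blow up of the integral_trap}.
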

%%%%%%%%%%%%%%%%%%%%%%%%%%%%%%%%%%%%%%%%%%%%%%%%%%%%%%%%%%%%%%%%%%%%%%%%%%%%%%%%%%%%%%%%%%%%%%%%%%%%%%%
\begin{proof}
We basically follow the proof of Proposition \ref{Local_injectivity}. 
To begin with, we note that (\ref{integral identity}) gives us the integral identity 
\[
\int_{\Omega}H \nabla u  \cdot \nabla u dx = 0,
\]
for any unique solution $u \in H^{1}_{\diamond}(\Omega)$ of
\[
\nabla \cdot \gamma_0 \nabla   u = 0\,\,\text{in}\,\,\Omega, \ \ \partial_{ \gamma_{0}}  u= f\,\,\text{on}\,\,\partial \Omega, \,\,{f} \in \dot{H}^{- 1/2}_{\diamond}(\overline{\Sigma}).
\]
and some $H \in  V_{\mathcal{L}_{r_0,\theta}}^{c}$ with $\left\|H \right\|=1$.

By the extension argument (see Remark \ref{whole_domain2}), we can assume that
\[
\mathrm{supp}H \cap \Sigma = \emptyset,
\]
that is, $\mathrm{supp}H$ consists of several cells in $ {\mathcal{T}_{r,\theta}}^{c}$ without having the intersection with $\Sigma$.
Then, there exist two exposed lower corner points $z_{0}$ and $z_1$ of some cell $C_H \in \mathcal{T}_{r,\theta}$ with angle $\theta$ or $\pi - \theta$. Further, these points are included in the interior of a lateral cell $C_\gamma$ for $\gamma_0$ (see the left in Figure \ref{transformations_trape}).

%%%%%%%%%%%%%%%%%%%%%%%%%%%%%%%%%%%%%%%%%%%%%%%%%%%%%%%%%%%%%%%%%%%%%%%%%%%%%%%%%%%%%%%%%%%%%%%%%%%%%%%%%%%%%%%%%%%%%%%%%
\begin{figure}[h]
\hspace{0.0cm}
\center
\includegraphics[keepaspectratio, scale=0.6]{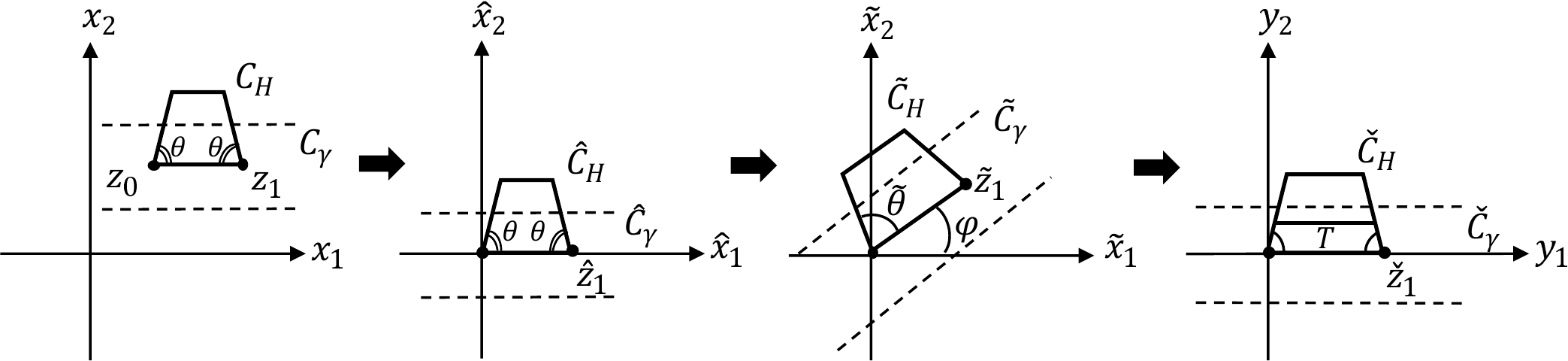}
\caption{Transformations.}
\label{transformations_trape}
\end{figure}
%%%%%%%%%%%%%%%%%%%%%%%%%%%%%%%%%%%%%%%%%%%%%%%%%%%%%%%%%%%%%%%%%%%%%%%%%%%%%%%%%%%%%%%%%%%%%%%%%%%%%%%%%%%%%%%%%%%%%%%%%

Now, by the argument in the proof of Proposition \ref{Local_injectivity} using the transformations (\ref{integral identity2})--(\ref{integral identity4}), the Runge approximation and the estimate for fundamental solutions (\ref{key integral}), we can show that
\[
I(\eta):= \int_{T}\check{H}\nabla_{y} \log|y-\eta| \cdot \nabla_{y}\log|y-\eta| dy, \ \mbox{for $\eta \in V\setminus T $}
\]
satisfies
\[
\sup_{\eta \in V \setminus T}|I(\eta)| < \infty,
\]
where $\check{H}$ is defined by the same way as (\ref{integral identity2})--(\ref{integral identity4}), and $T,\,V$ are defined as follows.
Namely, $T$ is a small closed isosceles trapezoid in a cell $\check{C}_H$ of $\mathrm{supp}\check{H}$ with the lower angle $\tilde \theta$ or $\pi - \tilde \theta$ at the origin and $\check z_1$, and $V$ is an open neighborhood of $T$ (see Figure \ref{transformations_trape} right). Here we can assume that by taking $T,\,V$ smaller if necessary, there exists a cell $\check{C}_{\gamma}$ for $\check{\gamma}_0$ such that $\check{\gamma}_0=I$ and $V\Subset\check{C}_\gamma$. Also, by the assumption (\ref{assumption_trape}), we have $\tilde{\theta} \neq \frac{\pi}{2}$. This is because $\tilde{\theta}$ is the angle between $D^{-1/2}R_{\psi}\left(
\begin{array}{c}
1 \\
0
\end{array}
\right)$ and $D^{-1/2}R_{\psi}\left(
\begin{array}{c}
k \\
1
\end{array}
\right),$
where $k=\frac{1}{\mathrm{tan}\theta}$ or $\frac{1}{\mathrm{tan}(\pi - \theta)}$.
Using Lemma \ref{Blow up of the integral_trap}, we have 
\[
\check H\bigl|_{\check C_H}=0,
\]
which is equivalent to $H\bigl|_{C_H}=0$, hence this contradicts to $\mathrm{supp}H\neq\emptyset$.
\end{proof}
%%%%%%%%%%%%%%%%%%%%%%%%%%%%%%%%%%%%%%%%%%%%%%%%%%%%%%%%%%%%%%%%%%%%%%%%%%%%%%%%%%%%%%%%%%%%%%%%%%%%%%%%%%%%%%%%%%%%%%%%%%%%%%%%%%%%%%%%%%%%%%%%%%%%%%%%%%%%%%%%%%%%%%%%%%%%%%%%%%%%%%%%%%%%%%%%%%%%%%%%%%%%%%%%%%%%%%%%%%%%%%%%%%%%%%%%%%%%%%%%%%%%%%%%%%%%%%%%%%%%%%%%%%%%%%%%%%%%%%%%%%%%%%%%
\section{Core computations}
%%%%%%%%%%%%%%%%%%%%%%%%%%%%%%%%%%%%%%%%%%%%%%%%%%%%%%%%%%%%%%%%%%%%%%%%%%%%%%%%
In this section, we provide the core computations obtaining some conditions for $H$ leading to $H=0$ which were skipped in the proofs of the conditional injectivity of the Fr\'{e}chet derivative in Subsection \ref{Parallelogram decomposition} and Subsection \ref{Trapezoidal decomposition}.
We first prove the following lemma applied in Subsection \ref{Parallelogram decomposition} which gives two independent conditions for $H$ obtained by approaching the singular point of a fundamental solution for $\gamma_0$ to the edge of a single corner and then to its vertex.

\begin{lemma}\label{Blow up of the integral}
Let $\epsilon >0$, $k \in \mathbb{R}$, and let $H=\left(
\begin{array}{cc}
h_{11}& h_{12} \\
h_{12}& h_{22} \\
\end{array}
\right)$ be a symmetric constant matrix.
Define the closed trapezoid $T:=\{ k y_{2} \leq y_{1} \leq \epsilon, \ 0 \leq y_2 \leq \epsilon \} \subset [0, \infty) \times (-\infty, 0]$ (see Figure \ref{trapezoid_int}). 
Let $V\subset{\mathbb R}^2$ be an open neighborhood of $T$ and define
\[
I(\eta) := \int_{T}H\nabla_{y}\log|y-\eta| \cdot \nabla_{y}\log|y-\eta| dy,\,\,
\eta\notin T.\]
Then, 
\begin{equation}
\sup_{\eta \in V \setminus T}|I(\eta)| < \infty.
\label{assumption_blow_up}
\end{equation}
implies
\begin{equation}
 \left \{
\begin{array}{l}
h_{11}+h_{22}=0\\
kh_{11} + h_{12}=0.
\end{array}
\right. \label{two_conditions}
\end{equation}
\end{lemma}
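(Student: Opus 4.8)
The plan is to compute $I(\eta)$ explicitly as $\eta$ approaches the boundary of $T$ in two stages, exploiting the logarithmic singularity of $\nabla_y \log|y-\eta|$. First I would observe that $\nabla_y \log|y-\eta| = (y-\eta)/|y-\eta|^2$, so that the integrand is $H(y-\eta)\cdot(y-\eta)/|y-\eta|^4$, which behaves like $|y-\eta|^{-2}$ near $y=\eta$; this is exactly borderline non-integrable over a two-dimensional region, so the divergence of $I(\eta)$ as $\eta$ approaches a smooth part of $\partial T$ from outside is governed by a local computation in a half-plane. Concretely, for $\eta$ tending to an interior point of the edge $\{y_1 = \epsilon\}$ (the ``vertical'' edge, using the axis conventions of the excerpt), a blow-up/rescaling argument reduces the leading behavior to the model integral over a half-plane $\{y_1 < 0\}$ of $H y \cdot y / |y|^4$ with $\eta = (t,0)$, $t \downarrow 0$; evaluating this in polar coordinates, the angular integral over $(\pi/2, 3\pi/2)$ of the quadratic form picks out precisely the trace, and one finds $I(\eta) \sim \frac{1}{2}(h_{11}+h_{22})\log\frac{1}{\mathrm{dist}(\eta, T)} + O(1)$. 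Hence boundedness of $I$ near this edge forces $h_{11}+h_{22}=0$.

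Next, I would repeat the analysis with $\eta$ approaching an interior point of the slanted edge $\{y_1 = k y_2\}$. The same half-plane model computation applies after rotating coordinates so that this edge becomes a straight boundary: the relevant quantity is now the trace of $H$ with respect to the rotated frame, equivalently $\langle H n, n\rangle + \langle H \tau, \tau\rangle$ summed over the normal $n$ and tangent $\tau$ to the slanted edge — but this again equals $h_{11}+h_{22}$, so it gives no new information by itself. The extra condition $k h_{11} + h_{12} = 0$ must instead come from a finer, subleading term, or — more cleanly — from letting $\eta$ approach the \emph{corner} at the origin, where the two edges meet at the (non-flat, since $k$ is finite) angle. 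There the wedge has opening different from $\pi$, so the model integral is over a wedge $W_{\tilde\theta}$ rather than a half-plane, and the coefficient of the logarithmic blow-up is an angular integral $\int_{W} H\omega\cdot\omega \, d\varphi$ over the complementary wedge $\mathbb{R}^2 \setminus W_{\tilde\theta}$ (with $\omega = (\cos\varphi,\sin\varphi)$). This angular integral is a linear combination of $h_{11}+h_{22}$, $h_{11}-h_{22}$, and $h_{12}$ with coefficients that are trigonometric functions of the wedge angle; imposing already $h_{11}+h_{22}=0$ and boundedness of $I$ near $0$ yields a second linear relation among $h_{11}$ and $h_{12}$, which after the trigonometric identities simplifies to $k h_{11} + h_{12} = 0$ (here $k=\cot\theta$ enters through the edge direction).

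The main obstacle I anticipate is the bookkeeping of the blow-up at the corner: one must justify that the contribution of $\mathrm{supp}\check H \setminus (\text{neighborhood of }0)$ and of the curved/far parts of $T$ stays $O(1)$, isolate the genuinely divergent piece, and carry out the rescaling $y = \eta + |\eta| z$ (or $y = \mathrm{dist}(\eta,T)\,z$) uniformly, checking that the remainder after extracting the $\log$-term is bounded. The cleanest route is probably to write $I(\eta)$ as a surface-integral-free expression by an integration by parts or by directly differentiating a potential, turning the divergent double integral into something whose $\eta$-asymptotics can be read off; alternatively one computes $I(\eta)$ in closed form on the model wedge using the explicit antiderivative of $r^{-1}$-type integrands in polar coordinates. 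Either way, once the two limiting directions (to the vertical edge and to the corner) are handled, the two relations in \eqref{two_conditions} follow, and the case $k=0$ (flat boundary) is recovered as the degenerate wedge angle $\tilde\theta = \pi/2$ in which the corner computation collapses to the edge one, consistent with $k h_{11}+h_{12} = h_{12} = 0$.
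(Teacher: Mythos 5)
Your strategy is essentially the same as the paper's: extract the coefficient of the logarithmic blow-up of $I(\eta)$ first as $\eta$ approaches a flat edge (forcing $h_{11}+h_{22}=0$) and then as it approaches the corner at the origin (forcing, given trace-freeness, $kh_{11}+h_{12}=0$); also your observation that the slanted edge alone reproduces only the trace condition matches the paper's Remark following Proposition 3.1. The difference is in execution: the paper computes $I(\eta)$ in closed form by iterated elementary integration (reducing the inner integral to antiderivatives of $\mathrm{Arctan}$-type functions, then integrating by parts), so the coefficients of $\log\frac{1}{|\eta_2|}$ and $\log|\eta_1-k\eta_2|$ are exhibited explicitly and the two limits $\eta_2\to-0$, then $\eta_1\to+0$ are read off with all remainders visibly $O(1)$; you instead propose a rescaling to model half-plane/wedge integrals and polar-coordinate angular integrals. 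Your route is more geometric and does give the right relations: the log coefficient at the corner is the angular integral $\int_0^{\tilde\theta}\langle H\omega,\omega\rangle\,d\varphi=\frac{\tilde\theta}{2}(h_{11}+h_{22})+\frac{\sin\tilde\theta\cos\tilde\theta}{2}(h_{11}-h_{22})+\sin^2\tilde\theta\,h_{12}$, which with $h_{11}+h_{22}=0$ and $\cot\tilde\theta=k$ reduces exactly to $kh_{11}+h_{12}=0$. Two caveats: the angular integral should be taken over the wedge of $T$ at the corner, not over the complementary wedge $\mathbb{R}^2\setminus W_{\tilde\theta}$ as you wrote (the slip is harmless only because the two differ by $\pi\,\mathrm{tr}H$, which you have already set to zero, and you should say so); and the uniform $O(1)$ bookkeeping for the rescaling (keeping $\eta$ at an angle bounded away from the edges when approaching the vertex, and controlling the near-singularity piece) is precisely the part you leave open, whereas the paper's explicit computation settles it automatically. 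So the proposal is sound, but to be complete you must either carry out those remainder estimates or fall back on the closed-form computation you mention as an alternative, which is what the paper does.
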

%%%%%%%%%%%%%%%%%%%%%%%%%%%%%%%%%%%%%%%%%%%%%%%%%%%%%%%%%%%%%%%%%%%%%%%%%%%%
\begin{figure}[h]
\hspace{0.0cm}
\center
\includegraphics[keepaspectratio, scale=0.7]{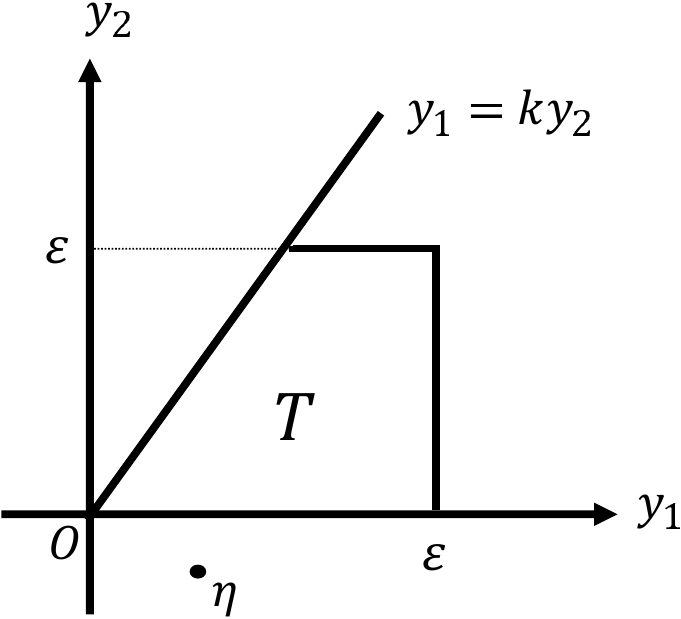}
\caption{Integral on trapezoid.}
\label{trapezoid_int}
\end{figure}
%%%%%%%%%%%%%%%%%%%%%%%%%%%%%%%%%%%%%%%%%%%%%%%%%%%%%%%%%%%%%%%%%%%%%%%%%%%%%%%%%%%%%%%%%%%%%%%%%%%%%%%%%%%%%%%%%%%%%%%%%%%%%%%%%%%%%%%%%%%%%%%%%%%%%%%%%%%%%%%%%%%%%%%%
\begin{proof} Let us assume \eqref{assumption_blow_up}. For $\eta=(\eta_1,\eta_2)^T\in V\setminus T$, express $I(\eta)$ in the form
\begin{equation}
\begin{split}
& I(\eta)=\int_{T}H\nabla \log|y-\eta| \cdot \nabla \log|y-\eta| dy
\\
& = \int_{T}\frac{h_{11}(y_{1}-\eta_{1})^{2}+h_{22}(y_{2}-\eta_{2})^{2}+2h_{12}(y_{1}-\eta_{1})(y_{2}-\eta_{2})}{\{(y_{1}-\eta_{1})^{2}+(y_{2}-\eta_{2})^{2} \}^{2}} dy. \label{Ieta}
\end{split}
\end{equation}
Since we approach $\eta$ to the bottom edge, $\eta_{2}<0$, $\eta_{1}-k\eta_{2} \neq 0$.
\par
By the change of variables given as $t=y_{1}-\eta_{1},\,s=y_{2}-\eta_{2}$ in (\ref{Ieta}) and further making the change of variables $t=su$, we have
\begin{equation}
\begin{split}
& I(\eta)=\int_{0}^{\epsilon}\int_{ky_{2}}^{\epsilon} \frac{h_{11}(y_{1}-\eta_{1})^{2}+h_{22}(y_{2}-\eta_{2})^{2}+2h_{12}(y_{1}-\eta_{1})(y_{2}-\eta_{2})}{\{(y_{1}-\eta_{1})^{2}+(y_{2}-\eta_{2})^{2} \}^{2}} dy_{1}dy_{2}
\\
& = \int^{\epsilon - \eta_{2}}_{-\eta_{2}}\int^{\epsilon-\eta_{1}}_{k(s+\eta_{2})-\eta_{1}} \frac{h_{11}t^{2}+h_{22}s^{2}-2h_{12}ts}{\{t^{2}+s^{2}\}^{2}} dtds 
\\
& = \int^{\epsilon - \eta_{2}}_{-\eta_{2}}\frac{1}{s}\int^{\frac{\epsilon-\eta_{1}}{s}}_{\frac{k(s+\eta_{2})-\eta_{1}}{s}} \frac{h_{11}u^{2}+h_{22}+2h_{12}u}{\{u^{2}+1 \}^{2}}du ds.
\end{split}
\end{equation}
Note that
\[
\frac{h_{11}u^{2}+h_{22}+2h_{12}u}{\{u^{2}+1 \}^{2}}=\frac{h_{11}+h_{22}}{2}\frac{d}{du}\mathrm{Arctan}u - \frac{h_{11}-h_{22}}{2}\frac{d}{du} \frac{u}{1+u^{2}}-h_{12}\frac{d}{du} \frac{1}{1+u^{2}}.
\]
Hence we have
\begin{equation}
\begin{split}
& I(\eta)=\frac{h_{11}+h_{22}}{2}\int^{\epsilon-\eta_{2}}_{-\eta_{2}} \frac{1}{s} \left( \mathrm{Arctan}\left(\frac{\epsilon-\eta_{1}}{s}\right) - \mathrm{Arctan}\left(\frac{ks+k\eta_{2}-\eta_{1}}{s}\right) \right) ds
\\
& - \frac{h_{11}-h_{22}}{2}\int^{\epsilon-\eta_{2}}_{-\eta_{2}} \frac{1}{s}  \left( \frac{\left(\frac{\epsilon-\eta_{1}}{s}\right)}{1+\left(\frac{\epsilon-\eta_{1}}{s}\right)^{2}} - \frac{\left(\frac{ks+k\eta_{2}-\eta_{1}}{s}\right)}{1+\left(\frac{ks+k\eta_{2}-\eta_{1}}{s}\right)^{2}} \right) ds
\\
& + h_{12}\int^{\epsilon-\eta_{2}}_{-\eta_{2}} \frac{1}{s}  \left( \frac{1}{1+\left(\frac{\epsilon-\eta_{1}}{s}\right)^{2}} - \frac{1}{1+\left(\frac{ks+k\eta_{2}-\eta_{1}}{s}\right)^{2}} \right) ds. \label{integral1}
\end{split}
\end{equation}
For the further computation of (\ref{integral1}), we compute the integral
\begin{equation}
J(f):=\int^{\epsilon-\eta_{2}}_{-\eta_{2}} \frac{1}{s} f\left(\frac{\epsilon-\eta_{1}}{s}\right) ds - \int^{\epsilon-\eta_{2}}_{-\eta_{2}}\frac{1}{s} f\left(\frac{ks+k\eta_{2}-\eta_{1}}{s}\right)  ds,
\end{equation}
where $f(x)=\mathrm{Arctan}(x)$ or $\frac{x}{1+x^{2}}$ or $\frac{1}{1+x^{2}}$.
By the change of variables given as $v=\frac{\epsilon-\eta_{1}}{s},\,v=\frac{ks+k\eta_{2}-\eta_{1}}{s}$ and integrating by parts, we have
\begin{equation}
\begin{split}
& J(f)=-\int^{\frac{\epsilon-\eta_{1}}{\epsilon-\eta_{2}}}_{\frac{\epsilon-\eta_{1}}{-\eta_{2}}} \frac{v}{\epsilon - \eta_{1}}f(v) \frac{\epsilon-\eta_{1}}{v^{2}} dv + \int^{\frac{k\epsilon-\eta_{1}}{\epsilon-\eta_{2}}}_{\frac{\eta_{1}}{\eta_{2}}} \frac{v-k}{k\eta_{2} - \eta_{1}}f(v) \frac{k\eta_{2}-\eta_{1}}{(v-k)^{2}} dv
\\
& = - \int^{\frac{\epsilon-\eta_{1}}{\epsilon-\eta_{2}}}_{\frac{\epsilon-\eta_{1}}{-\eta_{2}}} \left( \frac{d}{dv}\log|v|\right) f(v) dv + \int^{\frac{k\epsilon-\eta_{1}}{\epsilon-\eta_{2}}}_{\frac{\eta_{1}}{\eta_{2}}} \left(\frac{d}{dv}\log|v-k|\right)f(v) dv
\\
& = - \left[ \log|v| f(v) \right]^{\frac{\epsilon-\eta_{1}}{\epsilon-\eta_{2}}}_{\frac{\epsilon-\eta_{1}}{-\eta_{2}}} + \int^{\frac{\epsilon-\eta_{1}}{\epsilon-\eta_{2}}}_{\frac{\epsilon-\eta_{1}}{-\eta_{2}}} \log|v| \frac{d}{dv}f(v) dv
\\
& + \left[ \log|v-k| f(v) \right]^{\frac{k\epsilon-\eta_{1}}{\epsilon-\eta_{2}}}_{\frac{\eta_{1}}{\eta_{2}}} - \int^{\frac{k\epsilon-\eta_{1}}{\epsilon-\eta_{2}}}_{\frac{\eta_{1}}{\eta_{2}}} \log|v-k| \frac{d}{dv}f(v) dv. \label{integral1-1}
\end{split}
\end{equation}
Since 
\[
\int_{-\infty}^{\infty}\log|v-a|\frac{d}{dv}f(v)dv < \infty, \ \ a \in \mathbb{R},
\]
for all the three cases $f(x)=\mathrm{Arctan}(x)$, $\frac{x}{1+x^{2}}$, $\frac{1}{1+x^{2}}$, we have 
\begin{equation}
\begin{split}
& J(f) =  \underbrace{-f\left(\frac{\epsilon-\eta_{1}}{\epsilon-\eta_{2}} \right) \log\left| \frac{\epsilon-\eta_{1}}{\epsilon-\eta_{2}}\right|}_{=O(1)} + f\left(\frac{\epsilon-\eta_{1}}{-\eta_{2}}\right) \underbrace{\log\left|\frac{\epsilon-\eta_{1}}{-\eta_{2}}\right|}_{\log\frac{1}{|\eta_{2}|}+O(1)}
\\
& 
+f\left(\frac{k\epsilon-\eta_{1}}{\epsilon-\eta_{2}}\right)
\underbrace{\log\left| \frac{k\epsilon-\eta_{1}}{\epsilon-\eta_{2}} - k \right|}_{=\log\left|\eta_{1}-k\eta_{2}\right|+O(1)} 
- f\left(\frac{\eta_{1}}{\eta_{2}}\right) \underbrace{\log\left|\frac{\eta_{1}}{\eta_{2}}-k\right|}_{=\log\left|\eta_{1}-k\eta_{2}\right|+\log\frac{1}{|\eta_{2}|}} + O(1)
\\
&
=\left\{ f\left(\frac{\epsilon-\eta_{1}}{-\eta_{2}}\right) - f\left(\frac{\eta_{1}}{\eta_{2}}\right) \right\} \log\frac{1}{|\eta_{2}|}
\\
& + \left\{ f\left(\frac{k\epsilon-\eta_{1}}{\epsilon-\eta_{2}}\right) - f\left(\frac{\eta_{1}}{\eta_{2}}\right) \right\} \log\left|\eta_{1}-k\eta_{2}\right|+ O(1). \label{integral2}
\end{split}
\end{equation}
Using (\ref{integral1}) and (\ref{integral2}), we have
\begin{equation} \label{after_comp}
\begin{split}
& I(\eta) = \Biggl[ \frac{h_{11}+h_{22}}{2} \left\{ \mathrm{Arctan}\left(\frac{\epsilon-\eta_{1}}{-\eta_{2}}\right) - \mathrm{Arctan}\left(\frac{\eta_{1}}{\eta_{2}}\right) \right\} 
\\
& - \frac{h_{11}-h_{22}}{2}\left\{ \frac{\left(\frac{\epsilon-\eta_{1}}{-\eta_{2}}\right)}{1+\left(\frac{\epsilon-\eta_{1}}{-\eta_{2}}\right)^{2}} - \frac{\left(\frac{\eta_{1}}{\eta_{2}}\right)}{1+\left(\frac{\eta_{1}}{\eta_{2}}\right)^{2}} \right\}
\\
& - h_{12}\left\{ \frac{1}{1+\left(\frac{\epsilon-\eta_{1}}{-\eta_{2}}\right)^{2}} - \frac{1}{1+\left(\frac{\eta_{1}}{\eta_{2}}\right)^{2}} \right\}\Biggr] \log\frac{1}{|\eta_{2}|}
\\
& + \Biggl[ \frac{h_{11}+h_{22}}{2} \left\{ \mathrm{Arctan}\left(\frac{k\epsilon-\eta_{1}}{\epsilon-\eta_{2}}\right) - \mathrm{Arctan}\left(\frac{\eta_{1}}{\eta_{2}}\right) \right\} 
\\
& - \frac{h_{11}-h_{22}}{2}\left\{ \frac{\left(\frac{k\epsilon-\eta_{1}}{\epsilon-\eta_{2}}\right)}{1+\left(\frac{k\epsilon-\eta_{1}}{\epsilon-\eta_{2}}\right)^{2}} - \frac{\left(\frac{\eta_{1}}{\eta_{2}}\right)}{1+\left(\frac{\eta_{1}}{\eta_{2}}\right)^{2}} \right\}
\\
& - h_{12}\left\{ \frac{1}{1+\left(\frac{k\epsilon-\eta_{1}}{\epsilon-\eta_{2}}\right)^{2}} - \frac{1}{1+\left(\frac{\eta_{1}}{\eta_{2}}\right)^{2}} \right\}\Biggr] \log|\eta_{1}-k\eta_{2}| + O(1).
\end{split}
\end{equation}
First, fix $0<\eta_1<<1$ and consider the asymptotic behavior of $I(\eta)$ as $\eta_2 \to -0$. Then, we have
\[
\begin{split}
& I(\eta) = \Biggl[ \frac{h_{11}+h_{22}}{2} \left\{ \mathrm{Arctan}\left(\frac{\epsilon-\eta_{1}}{-\eta_{2}}\right) - \mathrm{Arctan}\left(\frac{\eta_{1}}{\eta_{2}}\right) \right\} 
\\
& - \frac{h_{11}-h_{22}}{2}\left\{ \frac{\left(\frac{\epsilon-\eta_{1}}{-\eta_{2}}\right)}{1+\left(\frac{\epsilon-\eta_{1}}{-\eta_{2}}\right)^{2}} - \frac{\left(\frac{\eta_{1}}{\eta_{2}}\right)}{1+\left(\frac{\eta_{1}}{\eta_{2}}\right)^{2}} \right\}
\\
& - h_{12}\left\{ \frac{1}{1+\left(\frac{\epsilon-\eta_{1}}{-\eta_{2}}\right)^{2}} - \frac{1}{1+\left(\frac{\eta_{1}}{\eta_{2}}\right)^{2}} \right\}\Biggr] \log\frac{1}{|\eta_{2}|} + O(1)
\\
& = \frac{h_{11}+h_{22}}{2} \left\{ \mathrm{Arctan}\left(\frac{\epsilon-\eta_{1}}{-\eta_{2}}\right) - \mathrm{Arctan}\left(\frac{\eta_{1}}{\eta_{2}}\right) \right\} \log\frac{1}{|\eta_{2}|}
\\
& - \frac{h_{11}-h_{22}}{2}\underbrace{\left\{ \frac{-\left(\epsilon-\eta_{1}\right)\eta_2}{\eta_2^{2}+\left(\epsilon-\eta_{1}\right)^{2} } - \frac{\eta_1\eta_2}{\eta_1^2+\eta_2^2} \right\} \log\frac{1}{|\eta_{2}|}}_{=O(1)}
\\
& - h_{12}\underbrace{\left\{ \frac{\eta^2_2}{\eta_2^{2}+\left(\epsilon-\eta_{1}\right)^{2} } - \frac{\eta^2_2}{\eta_1^2+\eta_2^2} \right\} \log\frac{1}{|\eta_{2}|}}_{=O(1)} + O(1)
\end{split}
\]
\[
= \frac{h_{11}+h_{22}}{2} \left\{ \mathrm{Arctan}\left(\frac{\epsilon-\eta_{1}}{-\eta_{2}}\right) - \mathrm{Arctan}\left(\frac{\eta_{1}}{\eta_{2}}\right) \right\} \log\frac{1}{|\eta_{2}|} + O(1).
\]
Here, by the assumption (\ref{assumption_blow_up}), the integral $I(\eta)$ should not blow up as $\eta_2 \to -0$. Hence, we have the following necessary condition
\[
\lim_{\eta_2 \to -0}\frac{h_{11}+h_{22}}{2} \underbrace{ \left\{ \mathrm{Arctan}\left(\frac{\epsilon-\eta_{1}}{-\eta_{2}}\right) - \mathrm{Arctan}\left(-\frac{\eta_{1}}{\eta_{2}}\right) \right\}}_{\to \pi} = 0
\]

\begin{equation}
\Leftrightarrow h_{11}+h_{22}=0. \label{cond1}
\end{equation}
\par
Next, using (\ref{cond1}) and applying $\lim_{\eta_2\rightarrow-0}$ to the first term of (\ref{after_comp}), we consider the asymptotic behavior of $I(\eta_1,-0):=\lim_{\eta_2\rightarrow-0}I(\eta)$ as $\eta_1 \to +0$.
Then, we have
\[
I(\eta_1, -0) 
= -\Biggl[\frac{h_{11}-h_{22}}{2}\frac{\left(\frac{k\epsilon-\eta_{1}}{\epsilon}\right)}{1+\left(\frac{k\epsilon-\eta_{1}}{\epsilon}\right)^{2}}
+ h_{12} \frac{1}{1+\left(\frac{k\epsilon-\eta_{1}}{\epsilon}\right)^{2}} \Biggr] \log|\eta_{1}|  + O(1).
\]
Here, by the assumption (\ref{assumption_blow_up}), the integral $I(\eta_1, -0)$ should not blow up as $\eta_1 \to +0$. Hence, we have the following necessary condition 
\[
\lim_{\eta_1 \to +0}\Biggl[\underbrace{\frac{h_{11}-h_{22}}{2}}_{=h_{11}}\underbrace{\frac{\left(\frac{k\epsilon-\eta_{1}}{\epsilon}\right)}{1+\left(\frac{k\epsilon-\eta_{1}}{\epsilon}\right)^{2}}}_{=\frac{k}{1+k^2}}
+ h_{12} \underbrace{\frac{1}{1+\left(\frac{k\epsilon-\eta_{1}}{\epsilon}\right)^{2}}}_{=\frac{1}{1+k^2}} \Biggr] = 0
\]
\begin{equation}
\Leftrightarrow kh_{11}+h_{12}=0. \label{cond2}
\end{equation}
Thus we have obtained \eqref{two_conditions}
\end{proof}
%%%%%%%%%%%%%%%%%%%%%%%%%%%%%%%%%%%%%%%%%%%%%%%%%%%%%%%%%%%%%%%%%%%%%%%%%%%%%%%%%%%%%%%%%%%%%%%%%%%%%%%%%%%%%%%%%%%%%%%%%%%%%%%%%%%%%%%%%%%%%%%%%%%%%%%%%%%%%%%%

If we could have used two corners, then we might have three independent conditions which immediately implies $H=0$. By using the extension argument, this is indeed the case for our trapezoidal decomposition 
for the perturbative conductivity $H$. More precisely, we have following lemma.

\begin{lemma}\label{Blow up of the integral_trap}
Let $\theta \in (0, \pi) \setminus \{ \frac{\pi}{2}\}$, and let $H=\left(
\begin{array}{cc}
h_{11}& h_{12} \\
h_{12}& h_{22} \\
\end{array}
\right)$ be a symmetric constant matrix.
Denote by $T$ a closed isosceles trapezoid  including two corners. Let them be the origin and $z_1$ on $y_1$-axis with the lower angle $\theta$ as in Figure \ref{symm_trapezoid}.
Define 
\[
I(\eta) := \int_{T}H\nabla_{y}\log|y-\eta| \cdot \nabla_{y}\log|y-\eta| dy,\,\,\eta\notin T
\]
and let $V \subset \mathbb{R}^2$ be an open neighborhood of $T$. Then,
\begin{equation}
\sup_{\eta \in V \setminus T}|I(\eta)| < \infty.
\label{assumption_blow_up_trape}
\end{equation}
implies
\[
H =0.
\]
\end{lemma}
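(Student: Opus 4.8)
The plan is to apply Lemma~\ref{Blow up of the integral} at each of the two lower corners of $T$ -- the origin and $z_{1}$ -- and to combine the linear relations on the entries of $H$ that result. Write $k:=\frac{1}{\tan\theta}$, which is finite and nonzero because $\theta\in(0,\pi)\setminus\{\tfrac{\pi}{2}\}$; geometrically $k$ is the slope of the leg of $T$ issuing from the origin.

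First I would localize the singularity of the integrand. Decompose $T=T_{0}\cup T_{1}\cup T_{\mathrm{mid}}$, where $T_{0}$ (resp.\ $T_{1}$) is the part of $T$ lying in a sufficiently small neighborhood of the origin (resp.\ of $z_{1}$), chosen so that $T_{0}$ and $T_{1}$ are trapezoids of the form treated in Lemma~\ref{Blow up of the integral} -- after a translation and, for $z_{1}$, the reflection $(y_{1},y_{2})\mapsto(-y_{1},y_{2})$ -- with parameter $k$, and $T_{\mathrm{mid}}:=\overline{T\setminus(T_{0}\cup T_{1})}$ contains neither corner. Correspondingly $I(\eta)=I_{0}(\eta)+I_{1}(\eta)+I_{\mathrm{mid}}(\eta)$. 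For $\eta$ in a small punctured neighborhood $V_{0}\subset V$ of the origin with $\eta\notin T$, the integrands of $I_{1}$ and $I_{\mathrm{mid}}$ are smooth on $T_{1}\cup T_{\mathrm{mid}}$, whence $I_{1}(\eta)+I_{\mathrm{mid}}(\eta)=O(1)$; combined with \eqref{assumption_blow_up_trape} this gives $\sup_{\eta\in V_{0}\setminus T}|I_{0}(\eta)|<\infty$. Since $T$ coincides with $T_{0}$ near the origin, this is exactly the finiteness hypothesis needed to run the argument of Lemma~\ref{Blow up of the integral} at that corner; the same reasoning near $z_{1}$ supplies the corresponding finiteness for $I_{1}$.

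Next I would invoke Lemma~\ref{Blow up of the integral} twice. At the origin the bottom edge of $T$ lies along $\{y_{2}=0,\ y_{1}\ge 0\}$ and the adjacent leg along $\{y_{1}=ky_{2}\}$, with $T_{0}=\{ky_{2}\le y_{1}\le\epsilon_{0},\ 0\le y_{2}\le\epsilon_{0}\}$, so Lemma~\ref{Blow up of the integral} gives
\[
h_{11}+h_{22}=0,\qquad k\,h_{11}+h_{12}=0 .
\]
For the corner $z_{1}$ I would use the reflection $\rho$ of $T$ about the perpendicular bisector of its bottom edge: this is a symmetry of the isosceles trapezoid $T$ carrying $z_{1}$ to the origin and exchanging the two legs, and its linear part is $\rho_{0}=\mathrm{diag}(-1,1)$. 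As in the coordinate changes of the proof of Proposition~\ref{Local_injectivity}, the integral for $(H,T)$ evaluated at $\eta$ equals the analogous integral for $(\rho_{0}H\rho_{0}^{T},\,\rho T)$ evaluated at $\rho\eta$, and $\rho_{0}H\rho_{0}^{T}$ has the same diagonal entries as $H$ and off-diagonal entry $-h_{12}$. Since $\rho$ maps the $z_{1}$-corner onto the model configuration already handled, Lemma~\ref{Blow up of the integral} now yields $h_{11}+h_{22}=0$ together with $k\,h_{11}-h_{12}=0$.

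Finally, adding $k\,h_{11}+h_{12}=0$ and $k\,h_{11}-h_{12}=0$ gives $2k\,h_{11}=0$, hence $h_{11}=0$ because $k\ne 0$, and then $h_{12}=0$ and $h_{22}=-h_{11}=0$, i.e.\ $H=0$. I expect the localization step to be the only genuinely delicate point: one must check that subtracting from $I(\eta)$ the model integral over the small corner trapezoid leaves an $O(1)$ remainder as $\eta$ tends to that corner from outside $T$, so that the hypothesis \eqref{assumption_blow_up_trape} is inherited corner by corner. Everything else is bookkeeping with isometries, and the exclusion $\theta=\tfrac{\pi}{2}$ enters precisely at the last line, since for $\theta=\tfrac{\pi}{2}$ one has $k=0$, $T$ is a rectangle, and the two corners only produce $\mathrm{Tr}\,H=0$ and $h_{12}=0$, which do not force $H=0$.
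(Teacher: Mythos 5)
Your proposal is correct and follows essentially the same route as the paper: apply Lemma~\ref{Blow up of the integral} at the origin, transfer the corner at $z_{1}$ to the model configuration by the reflection with linear part $\mathrm{diag}(-1,1)$ (which flips the sign of $h_{12}$), and add $kh_{11}+h_{12}=0$ to $kh_{11}-h_{12}=0$ to conclude $h_{11}=h_{12}=h_{22}=0$ since $k\neq 0$. Your explicit localization step (splitting off small model trapezoids at each corner and absorbing the rest into an $O(1)$ term) is a minor refinement of what the paper leaves implicit when it applies Lemma~\ref{Blow up of the integral} directly to the isosceles trapezoid.
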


%%%%%%%%%%%%%%%%%%%%%%%%%%%%%%%%%%%%%%%%%%%%%%%%%%%%%%%%%%%%%%%%%%%%%%%%%%%%
\begin{figure}[h]
\hspace{0.0cm}
\center
\includegraphics[keepaspectratio, scale=0.6]{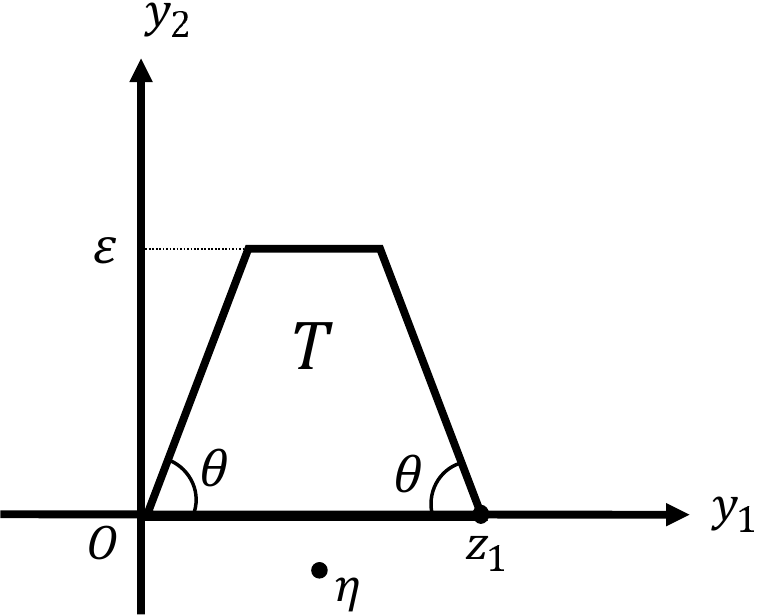}
\caption{Integral on isosceles trapezoid.}
\label{symm_trapezoid}
\end{figure}
%%%%%%%%%%%%%%%%%%%%%%%%%%%%%%%%%%%%%%%%%%%%%%%%%%%%%%%%%%%%%%%%%%%%%%%%%%%%%%%%%%%%%%%%%%%%%%%%%%%%%%%%%
%%%%%%%%%%%%%%%%%%%%%%%%%%%%%%%%%%%%%%%%%%%%%%%%%%%%%%%%%%%%%%%%%%%%%%%%%%%%%%%%%%%%%%%%%%%%%%%%%%%%%%%%%%%%%%%%%%%%%
\begin{proof}
Let us assume \eqref{assumption_blow_up_trape}. First, we let the singular point $\eta$ of $\log|y-\eta|$ approach to the corner at the origin. Then, from Lemma \ref{Blow up of the integral}, we have
\begin{equation}
\left \{
\begin{array}{l}
h_{11}+h_{22}=0,\\
kh_{11} + h_{12}=0,
\end{array}
\right. \label{condition_first}
\end{equation}
where $k=\frac{1}{\mathrm{tan}\theta}$. 
\par
%%%%%%%%%%%%%%%%%%%%%%%%%%%%%%%%%%%%%%%%%%%%%%%%%%%%%%%%%%%%%%%%%%%%%%%%%%%%
\begin{figure}[h]
\hspace{0.0cm}
\center
\includegraphics[keepaspectratio, scale=0.55]{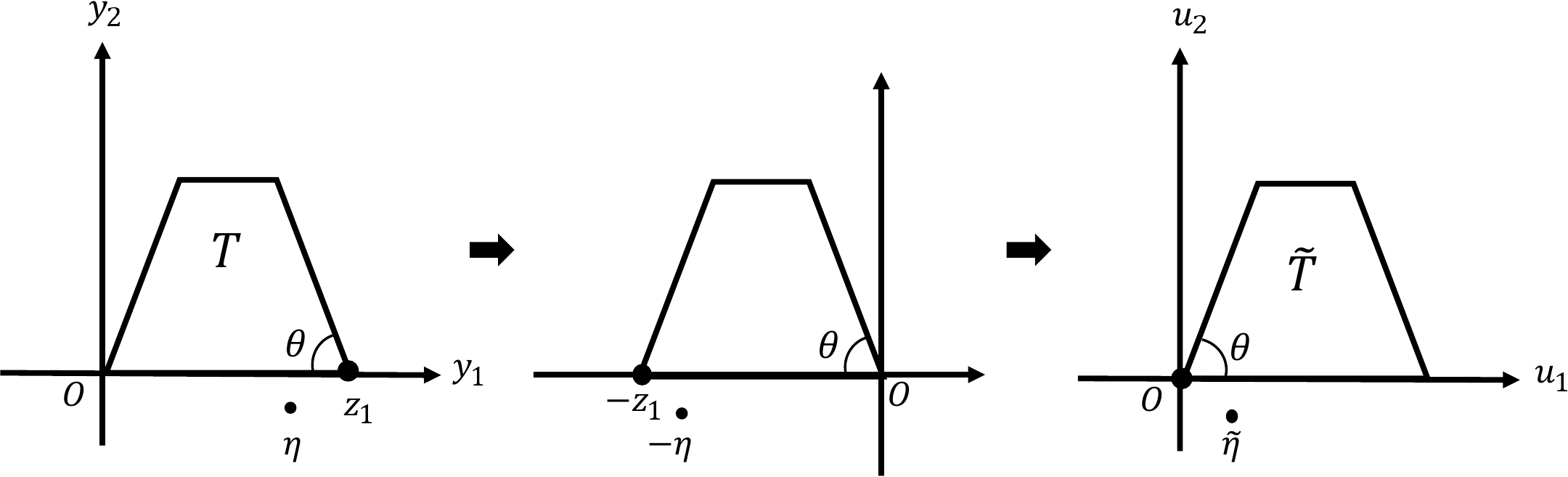}
\caption{Transformation.}
\label{trapezoid_trans}
\end{figure}
%%%%%%%%%%%%%%%%%%%%%%%%%%%%%%%%%%%%%%%%%%%%%%%%%%%%%%%%%%%%%%%%%%%%%%%%%%%%%%%%%%%%%%%%%%%%%%%%%%%%%%%%%
Next, we let the singular point $\eta$ of $\log|y-\eta|$ approach to the corner at $z_1$.
By the change of variables given as
$y=Au + z_1
$ with $A=\left(
\begin{array}{cc}
-1 & 0 \\
0 & 1 \\
\end{array}
\right)$ (see Figure \ref{trapezoid_trans}), we have
\[
\begin{split}
& I(\eta) = 
\int_{T}H\nabla_{y}\log|y-\eta| \cdot \nabla_{y}\log|y-\eta| dy 
\\
& =
\int_{\tilde{T}}
A^{T}HA
\nabla_{u}\log|Au + z_1 -\eta| \cdot \nabla_{u}\log|Au + z_1 -\eta| du
\\
& = 
\int_{\tilde{T}}
\left(
\begin{array}{cc}
h_{11} & -h_{12} \\
-h_{12} & h_{22} \\
\end{array}
\right)
\nabla_{u}\log|u - \tilde{\eta} | \cdot \nabla_{u}\log|u - \tilde{\eta} | du,
\end{split}
\]
where 
\[
\tilde{T}:=A(T-z_1), \ \ \ \tilde{\eta}:=A(\eta - z_1).
\]
Then, from Lemma \ref{Blow up of the integral}, we have
\[
\left \{
\begin{array}{l}
h_{11}+h_{22}=0,\\
kh_{11} - h_{12}=0.
\end{array}
\right.
\]
This together with (\ref{condition_first}) yields
\[
2kh_{11}=0.
\]
By the assumption $\theta \in (0, \pi) \setminus \{ \frac{\pi}{2}\}$, we have $k \neq 0$ and hence $h_{11}=0$, which immediately implies $h_{12}=h_{22}=0$. Thus, we have proved $H=0$.
\end{proof}

\section{Concluding remarks}

In this paper, we showed the probabilistic local Lipschitz stability for EIT in parallelogram- or trapezoid-based decomposed domains with piecewise constant anisotropic conductivities. As a result we also gave the probabilistic local 
recovery for the aforementioned EIT. The core computations of proving these are Propositions \ref{Local_injectivity} and \ref{Local_injectivity_trap}, and the key ingredients for its proof are the following:

\begin{itemize}

\item[(i)]
We used different grid-based decompositions of $\Omega$ for the background conductivity and perturbative conductivity. Upon using these decompositions, we can expose some corner points of a cell of the support of the perturbative conductivity inside the interior of a cell of the background conductivity by using the extension argument. This allowed us to conduct the core computations deriving the perturbative conductivity $H=0$, that is the injectivity of the Fr\'echet derivative $F'$ of the forward operator $F$. 
\item[(ii)]
In proving $H=0$ from the integral identity testing $H$ with solutions of the conductivity equation, we used the Runge approximation theorem to approximate the solutions by fundamental solutions of the background conductivity equation and targeted the exposed corner with the singular points of the fundamental solutions.
\end{itemize}

\noindent
Generalizations of our results that are worth exploring include
\begin{itemize}
\item As a natural extension of the work in this paper, an important study with immediate results is that of the three-dimensional case.
\item Generalizing the main results to the piecewise smooth case.
\item Showing the injectivity of the Fr\'{e}chet derivative 
with a joint, single domain decomposition, which would imply avoiding the irrationality and angle conditions.
\item Using other decompositions for $\Omega$ such as one with pentagons.
\end{itemize}

For all of these, we need to have a fundamental solution or a singular solution with a local pointwise estimate from below for its gradient, which will be our future works and opened to anybody who is interested in this issue.

\section*{Acknowledgments}
The first author was supported by the Simons Foundation under the MATH + X program, the National Science Foundation under grant DMS-2108175, and the corporate members of the Geo-Mathematical Imaging Group at Rice University.
The second author was supported by Grant-in-Aid for JSPS Fellows (No.21J00119), Japan Society for the Promotion of Science.
The third author was  partially supported by the Ministry of Science and Technology of Taiwan (Grant No. MOST 108-2115-M-006-018-MY3).
The fourth author was supported by JSPS KAKENHI (Grant No. JP19K03554). The fifth author was supported by Start-up Research Grant SRG/2021/001432  from the Science and Engineering Research Board, Government of India.

\bibliographystyle{plain}
\bibliography{Local recovery of a piecewise constant anisotropic conductivity in EIT on domains with exposed corners.bbl}

\end{document}